\newtheorem{theor}{Theorem}[section]
\newtheorem{propo}[theor]{Proposition}
\newtheorem{coroll}[theor]{Corollary}
\newtheorem{lemma}[theor]{Lemma}
\newtheorem{conje}[theor]{Conjecture}
\newtheorem{defi}[theor]{Def\mbox{}inition}
\newtheorem{defis}[theor]{Def\mbox{}initions}
\newenvironment{proof}
  {\noindent {\sc Proof}}
  {\par \hfill \rule{1ex}{1ex}}
\def\sucx{{\bar x}}
\def\sucy{{\bar y}}
\def\sucz{{\bar z}}
\def\asra{{ar}}
\def\asce{{ac}}
\def\ascd#1{{ac_{#1}}}
\def\mach{{cac}}
\def\dmach{{\delta-cac}}
\def\limsupline{\mathop{\overline{\lim}}}
\def\liminfline{\mathop{\underline{\lim}}}
\newcommand{\myatop}[2]{\genfrac{}{}{0pt}{}{#1}{#2}}
\def\adh#1{\overline{#1}}
\def\co{\mathop{\rm co}\nolimits}
\def\cb{{\cal B}}
\def\cc{{\cal C}}
\def\ct{{\cal T}}
\def\cv{{\cal V}}
\def\IN{\mathbb{N}}
\def\IK{\mathbb{K}}
\def\IR{\mathbb{R}}
\newcommand{\knat}{k\in\mathbb{N}}
\newcommand{\mnat}{m\in\mathbb{N}}
\newcommand{\nnat}{n\in\mathbb{N}}
\newcommand{\disp}{\displaystyle}
\newcommand{\vareps}{\varepsilon}
\newcommand{\longto}{\longrightarrow}
\title{Continuity properties of sequentially asymptotically center-complete spaces}
\author{C. Angosto, M. C. List\'an-Garc\'{\i}a, F. Rambla-Barreno}
\date{{\small \textit{Dpto. de Matem\'atica Aplicada y Estad\'{\i}stica, Universidad Polit\'ecnica de Cartagena \footnote{The f\mbox{}irst author was partially supported by the project MTM2011-25377 of
the Spanish Ministry of Science and Innovation. The second and third authors were partially supported by Junta de Andaluc\'{\i}a and FEDER grant FQM-257.} \\ Paseo de Alfonso XIII 52,
30203-Cartagena (Murcia), Spain \\ \textit{email: carlos.angosto@upct.es} \\ \mbox{} \\ Dpto. de Matem\'aticas, Universidad de C\'adiz \\ Apdo. 40, 11510-Puerto Real (C\'adiz), Spain} \\ \textit{emails: mariadelcarmen.listan@uca.es; fernando.rambla@uca.es}}}
\begin{document}

\maketitle

\abstract{We obtain formulae to calculate the asymptotic center and radius of bounded sequences in $\cc_0(L)$ spaces. We also study the existence of continuous selectors for the asymptotic center map in general Banach spaces. In Hilbert spaces, even a H\"older-type estimation is given.\footnote{Keywords: Chebyshev center, continuous selector, asymptotic center. \\AMS MSC (2010): Primary 41A50, Secondary 46E15.}}

\section{Introduction}

The notions of Chebyshev center and radius were introduced by A. L. Garkavi (\cite{Ga}) to study some approximation problems in normed spaces:

\begin{defis}
Let $X$ be a normed space. If $A \subseteq X$ is bounded, its {\sl Chebyshev radius} is given by 
$$r(A) = \inf_{y\in X}\sup_{x \in A} \|x-y\|$$
and its Chebyshev center by
$$c(A) = \{y \in X: \sup_{x \in A} \|x-y\| = r(A)\}.$$
\end{defis}

This was followed by M. Edelstein's notions (\cite{Ed}) of asymptotic center and radius of a bounded sequence. These were def\mbox{}ined in uniformly convex spaces and subsequently generalized to Banach spaces by T. C. Lim in \cite{Lim74} (who went even further, dealing with well-ordered nets). We will need an additional concept, related to the asymptotic center:

\begin{defis}
Let $X$ be a normed space. If $\sucx = {(x_n)}_n$ is a bounded sequence in $X$, its {\sl asymptotic radius} is given by 
$$\asra(\sucx) = \inf_{y\in X}\limsupline_n \|x_n-y\|$$
and its asymptotic center by
$$\asce(\sucx) = \{y \in X: \limsupline_n \|x_n-y\| = \asra(\sucx)\}.$$
Let us def\mbox{}ine also the set, depending on $\delta \geq 0$,
$$\ascd{\delta}(\sucx) = \{y \in X: \limsupline_n \|x_n-y\| \leq \asra(\sucx)+\delta\}.$$
\end{defis}

For bounded, decreasing nets of sets, the concepts of ``asymptotic center'' and ``asymptotic radius'' can be def\mbox{}ined analogously (\cite{Lim83}) and generalize both the Chebyshev center / radius of a bounded set and the asymptotic center/radius of a bounded sequence.

We will also say that a Banach space is center-complete $/$ sequentially asymptotically center-complete $/$ asymptotically center-complete (in short, $cc / sacc / acc$) whenever every bounded set $/$ bounded sequence $/$ bounded net of sets has a nonempty center $/$ asymptotic center $/$ asymptotic center.

There exist many results (\cite{AmMaSa}, \cite{Ga}, \cite{Lim83}, \cite{Ve}) concerning the existence of centers and asymptotic centers. Moreover, in the case of center-complete spaces several authors (\cite{AmMa}, \cite{BaPa}, \cite{Ma}) have tried and found conditions guaranteeing the existence of a continuous selector for the center map, i. e. a continuous $\varphi: \cb \longto X$ satisfying $\varphi(A) \in c(A)$, where $cb$ is the set of bounded subsets of the normed space $X$, endowed with the Hausdorf\mbox{}f metric. Let us recall that this selector may fail to exist even in the 3-dimensional case (\cite{AmMa}).

In this paper we provide formulae to calculate the asymptotic center and radius of sequences in $\cc_0(L)$ and also give some results concerning existence of continuous selectors for bounded sequences, in analogy with the aforementioned ones. Note that, in the separable case, each result concerning sequential asymptotic center completeness produces a result on (Chebyshev) center completeness (\cite{LiRa}).

More specifically, in corollary \ref{corollphi} we obtain, for certain Banach spaces, a continuous mapping $\varphi$ such that $\varphi(\sucx) \in \asce(\sucx)$ for every bounded sequence $\sucx$ and, additionally:

\begin{itemize}
\item $\varphi(\sucx)= \lim_n x_n$ if $\sucx$ converges.
\item $\varphi(\sucx) = \varphi(F(\sucx))$ where $F$ is the forward operator.
\item $\varphi(\sucx) = \varphi({(x_{\pi(n)})}_n)$ for every bijection $\pi: \IN \to \IN$.
\end{itemize}

We only deal with real Banach spaces, usually denoted by $X$ or $Y$. The space of bounded sequences in $X$ is denoted by $\ell_\infty(X)$. Every topological space considered is Hausdorf\mbox{}f, and $K$ will always denote a compact space. Similarly, $L$ will always be a locally compact space. The Banach spaces $\cc(K)$ and $\cc_0(L)$ are as usual: the space of continuous functions def\mbox{}ined on $K$ and the space of continuous functions def\mbox{}ined on $L$ and vanishing at inf\mbox{}inity, i. e. those $f: L \to \IK$ continuous and such that for every $\vareps>0$ the set $\{t \in L : |f(t)| \geq \vareps\}$ is compact (note that this includes the $\cc(K)$ spaces as a particular case).

The notions we have studied are ``absolute'' center and radius in all cases. It is also possible to study the ``relative'' versions (e. g. the center of a subset of $\ell_\infty$ with respect to $c_0$), which is also a classical topic and would introduce an additional level of complexity in the problem.

\section{A formula for the asymptotic center in $\cc_0(L)$ spaces}

T. C. Lim (\cite{Lim83}) proved that every $\cc(K)$ is asymptotically center-complete. He also gave formulas to calculate the asymptotic center and radius of every bounded sequence in some spaces, namely $c_0$, $c$ and $\ell_\infty$.

In what follows we will give a generalization of the sequential case by proving that every $\cc_0(L)$ space is sequentially asymptotically center-complete. Moreover, the proof presented here provides a formula for the center and radius in all such spaces. As an example of application, we will show how Lim's formulae for the radius can be retrieved from ours.

We need two lemmas, the f\mbox{}irst one is well known and can be found e.g. in \cite{Na}, p. 442:

\begin{lemma}
Let $\ct$ be a Hausdorf\mbox{}f topological space. Then $\ct$ is normal if and only if for every upper semicontinuous
function $f: \ct \to \IR$ and lower semicontinuous function $h: \ct \to \IR$ satisfying $f\leq h$, there exists a continuous function $g: \ct \to \IR$ such that $f \leq g \leq h$.
\end{lemma}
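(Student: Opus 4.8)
The plan is to prove the two implications separately; the forward implication (normal $\Rightarrow$ insertion) is the substantial one and is the content of the classical Kat\v{e}tov--Tong insertion theorem, while the converse is a quick application of Urysohn's lemma.

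For the converse I would assume the insertion property and verify normality directly. Given disjoint closed sets $A, B \subseteq \ct$, set $f = \chi_A$ and $h = \chi_{\ct \setminus B}$. Since $A$ is closed, $f$ is upper semicontinuous; since $\ct \setminus B$ is open, $h$ is lower semicontinuous; and $A \cap B = \emptyset$ forces $A \subseteq \ct\setminus B$, hence $f \le h$. An inserted continuous $g$ with $f \le g \le h$ must then equal $1$ on $A$ and $0$ on $B$, so $\{g > 1/2\}$ and $\{g < 1/2\}$ are disjoint open sets separating $A$ and $B$, giving normality.

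For the forward implication I would first reduce to bounded data: composing with an increasing homeomorphism $\theta: \IR \to (-1,1)$ (for instance $\theta(t) = t/(1+|t|)$) replaces $f, h$ by $\theta\circ f$ and $\theta \circ h$, which remain upper and lower semicontinuous and ordered and now take values in $[-1,1]$; moreover any continuous $\tilde g$ squeezed between them automatically lands pointwise in the open interval $(-1,1)$, so $g = \theta^{-1}\circ \tilde g$ recovers the desired function. The heart of the matter is then an iterated Urysohn construction. I would establish the one-step lemma: if $u$ is u.s.c. with $u \le \gamma$, $v$ is l.s.c. with $v \ge -\gamma$, and $u \le v$, then $\{u \ge \gamma/3\}$ and $\{v \le -\gamma/3\}$ are disjoint closed sets, so normality and Urysohn's lemma yield a continuous $w$ with $|w| \le \gamma/3$, $u - w \le \frac{2}{3}\gamma$ and $v - w \ge -\frac{2}{3}\gamma$. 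Applying this repeatedly with $\gamma_n = (2/3)^n$ to the successive residuals $f - \sum_{k<n} w_k$ and $h - \sum_{k<n} w_k$, and summing, produces a uniformly (geometrically) convergent series whose sum $g$ is continuous; since the one-sided residual bounds shrink to $0$, passing to the limit forces $f \le g \le h$.

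The main obstacle is the forward direction, and within it the bookkeeping of the one-step contraction: one must track only the correct one-sided bounds ($u \le \gamma$ and $v \ge -\gamma$) rather than a symmetric band, since subtracting $w$ does not keep the residuals inside a shrinking symmetric interval, and one must verify that uniform convergence of $\sum w_n$ together with the vanishing residuals is precisely what yields the two inequalities $f \le g$ and $g \le h$ in the limit. The reduction step also hinges on the small but essential observation that the interpolant never touches the endpoints $\pm 1$, so that $\theta^{-1}$ may legitimately be applied.
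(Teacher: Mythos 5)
Your argument is correct, but it is worth noting that the paper does not prove this lemma at all: it is stated as a known result (the Kat\v{e}tov--Tong insertion theorem) with a citation to Nagata's \emph{Modern general topology}, p.~442. What you have written is a valid self-contained proof along the classical lines. The converse via $\chi_A$ and $\chi_{\ct\setminus B}$ and the level sets $\{g>1/2\}$, $\{g<1/2\}$ is the standard reduction to Urysohn-type separation and is fine. In the forward direction, the reduction to $[-1,1]$-valued data through an increasing homeomorphism $\theta:\IR\to(-1,1)$ is legitimate (monotone continuous post-composition preserves both kinds of semicontinuity and the order, and the squeezed interpolant stays strictly inside $(-1,1)$ so $\theta^{-1}$ applies), and your one-step lemma checks out: $\{u\geq\gamma/3\}$ and $\{v\leq-\gamma/3\}$ are disjoint closed sets by $u\leq v$, and the Urysohn function equal to $\gamma/3$ on the first and $-\gamma/3$ on the second gives $u-w\leq\frac23\gamma$ and $v-w\geq-\frac23\gamma$ in both the ``on'' and ``off'' cases. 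Your emphasis on tracking only the one-sided bounds $u\leq\gamma$, $v\geq-\gamma$ (rather than a symmetric band) is exactly the point where a careless version of this induction breaks, and you handle it correctly; the geometric uniform convergence of $\sum_n w_n$ then gives continuity of $g$, and the vanishing residual bounds give $f\leq g\leq h$ in the limit. In short: the paper buys the statement off the shelf, while you supply the standard iterated-Urysohn proof; both are acceptable, and yours has the advantage of making the article self-contained at the cost of a page of bookkeeping.
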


As a consequence, let us prove

\begin{lemma}\label{ndist}
Let $\ct$ be a normal Hausdorf\mbox{}f space. If $a, b: \ct \to \IR$ are respectively lower and upper semicontinuous functions, $a \leq b$ and $b-a$ is bounded, then there exists a continuous function $g: \ct \to \IR$ such that $b-g$ is bounded and $$\|b-g\|=\|g-a\|=\frac{1}{2}\|b-a\|.$$ Moreover, for every $t_0 \in \ct$ and $s \in [a(t_0),b(t_0)]$ there exists a continuous function $g: \ct \to \IR$ such that $b-g$ is bounded, $g(t_0)=s$ and $$\max\{\|b-g\|,\|g-a\|\}= \max\{b(t_0)-g(t_0), g(t_0)-a(t_0), \frac{1}{2}\|b-a\|\}.$$
\end{lemma}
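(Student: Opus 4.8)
The plan is to reduce everything to the sandwiching lemma stated just above, realizing the desired $g$ as a continuous function squeezed between a suitable upper semicontinuous lower barrier $f$ and a lower semicontinuous upper barrier $h$. Throughout I write $M=\|b-a\|=\sup_{t}(b(t)-a(t))$ (the last equality holds because $a\le b$), which is finite by hypothesis.

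For the first identity I would take $f=b-\tfrac12 M$ and $h=a+\tfrac12 M$. Then $f$ is upper semicontinuous and $h$ is lower semicontinuous (adding constants preserves semicontinuity), and the inequality $f\le h$ is exactly $b-a\le M$, which holds by the definition of $M$. The preceding lemma then yields a continuous $g$ with $b-\tfrac12 M\le g\le a+\tfrac12 M$. From $g\ge b-\tfrac12 M$ and $g\le a+\tfrac12 M\le b+\tfrac12 M$ I get $|b-g|\le\tfrac12 M$, and symmetrically $|g-a|\le\tfrac12 M$; in particular $b-g$ is bounded and $\|b-g\|,\|g-a\|\le\tfrac12 M$. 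To upgrade these to equalities I would pick $t_n$ with $b(t_n)-a(t_n)\to M$: since $(b-g)+(g-a)=b-a$ while each summand is $\le\tfrac12 M$, both $b(t_n)-g(t_n)$ and $g(t_n)-a(t_n)$ must tend to $\tfrac12 M$, forcing $\|b-g\|=\|g-a\|=\tfrac12 M$.

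For the ``moreover'' part, set $R=\max\{b(t_0)-s,\ s-a(t_0),\ \tfrac12 M\}$, noting $b(t_0)-s\ge 0$ and $s-a(t_0)\ge 0$ since $s\in[a(t_0),b(t_0)]$. I would define the barriers $f=b-R$ and $h=a+R$, but override their values at the single point $t_0$ by $f(t_0)=h(t_0)=s$. Because $R\ge b(t_0)-s$ and $R\ge s-a(t_0)$ we have $b(t_0)-R\le s\le a(t_0)+R$, so at $t_0$ we are \emph{raising} $f$ and \emph{lowering} $h$; in a Hausdorff space such a one-point monotone modification preserves upper/lower semicontinuity, since the relevant superlevel (resp. sublevel) sets change only by adjoining or deleting the closed singleton $\{t_0\}$. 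Away from $t_0$ the inequality $f\le h$ reads $b-a\le 2R$, which holds since $b-a\le M\le 2R$, and at $t_0$ it is the equality $s=s$. Hence the preceding lemma gives a continuous $g$ with $f\le g\le h$; squeezing at $t_0$ forces $g(t_0)=s$, and elsewhere (exactly as in the first part) $b-R\le g\le a+R$, whence $\|b-g\|,\|g-a\|\le R$ and $b-g$ is bounded, so $\max\{\|b-g\|,\|g-a\|\}\le R$.

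Finally I would check that $R$ is also a lower bound, yielding equality: evaluating at $t_0$ gives $\|b-g\|\ge b(t_0)-s$ and $\|g-a\|\ge s-a(t_0)$, while the pointwise inequality $\max\{b-g,\,g-a\}\ge\tfrac12(b-a)$, combined with the identity $\sup_t\max\{p(t),q(t)\}=\max\{\sup_t p,\ \sup_t q\}$, yields $\max\{\|b-g\|,\|g-a\|\}\ge\tfrac12 M$; taking the maximum of the three lower bounds gives $\ge R$. The only genuinely delicate points I anticipate are the verification that the one-point override keeps $f$ upper and $h$ lower semicontinuous (this is where Hausdorffness enters) and the clean derivation of the $\tfrac12 M$ lower bound for the maximum via the two displayed pointwise/supremum identities; everything else is bookkeeping with the two-sided estimates $b-R\le g\le a+R$.
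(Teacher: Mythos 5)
Your proof is correct. The first half coincides with the paper's argument (same barriers $f=b-\tfrac12\|b-a\|$, $h=a+\tfrac12\|b-a\|$; your sequence $t_n$ with $b(t_n)-a(t_n)\to\|b-a\|$ replaces the paper's one-line triangle inequality $\|b-a\|\le\|b-g\|+\|g-a\|$, to the same effect). For the ``moreover'' part the mechanism is genuinely different: the paper perturbs $a$ and $b$ themselves at $t_0$, \emph{widening} the band to functions $\tilde a\le\tilde b$ whose global half-gap is attained at $t_0$ with midpoint $s$, and then invokes the first part of the lemma; you instead \emph{pinch} the sandwich barriers $f=b-R$, $h=a+R$ down to the single value $s$ at $t_0$ and invoke the insertion lemma directly. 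Your route avoids the paper's WLOG reduction ($b(t_0)-s\le s-a(t_0)$) and its somewhat opaque formulas for $\tilde a,\tilde b$, and makes the final verification of the max identity more transparent via the explicit constant $R$; both routes rest on the same key observation that a monotone one-point modification preserves semicontinuity because superlevel (resp.\ sublevel) sets change only by adjoining the closed singleton $\{t_0\}$ --- and indeed only adjoining ever occurs, so your parenthetical ``or deleting'' (which would not preserve closedness in general) is never actually needed.
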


\begin{proof}

Let $\gamma = \frac{1}{2} \| b - a \|$. Def\mbox{}ine $f,h:\ct \rightarrow \IR$ as $f(t) = b(t) - \gamma$ and $h(t)=a(t) + \gamma$; by the previous lemma, there exists $g: \ct \rightarrow \IR$ continuous and such that $f \leq g \leq h$. For every $t \in \ct$ we have
$$b(t) - g(t) -\gamma \leq 0 \leq a(t) - g(t) + \gamma$$
from this and $a(t) \leq b(t)$, we get
$$a(t) - g(t) \leq b(t) - g(t) \leq \gamma$$ 
and
$$g(t) - b(t) \leq g(t) - a(t) \leq \gamma.$$
Therefore $b - g$ and $g - a$ are bounded and $\max\{ \| b - g \|, \|g - a\| \} \leq \frac{1}{2} \| b - a \|$. Using now the triangle inequality we obtain $\| b - a \| \leq \|b - g\| + \|g - a\| \leq 2 \max\{ \| b - g \|, \|g - a\|\} \leq \|b-a\|$ and this implies
$$\|b - g\|=\|g - a\|=\frac{1}{2}\|b - a\|.$$

Now let $t_0 \in \ct$ and $s \in [a(t_0),b(t_0)]$. Denote by $\chi_{\{t_0\}}$ the characteristic function of $\{t_0\}$. We can assume without loss of generality that $b(t_0)-s \leq s-a(t_0)$ and so $2s \geq b(t_0)+a(t_0)$. Consider $$\tilde{b} = b + (\max\{2s-a(t_0)-b(t_0),\gamma+s-b(t_0)\})\chi_{\{t_0\}}$$ and $$\tilde{a} = a - (\max\{0,\gamma-s+a(t_0)\})\chi_{\{t_0\}}.$$

It is clear that $\tilde{a}, \tilde{b}$ are respectively lower and upper semicontinuous, hence there exists a continuous function $g: \ct \to \IR$ such that $b-g$ is bounded and $$\|\tilde{b}-g\|=\|g-\tilde{a}\|=\frac{1}{2}\|\tilde{b}-\tilde{a}\| = \frac{1}{2}(\tilde{b}(t_0)-\tilde{a}(t_0))$$
where the last equality is true because $\frac{1}{2}(\tilde{b}(t_0)-\tilde{a}(t_0)) = \max\{s-a(t_0),\gamma\}$. By the previous chain of equalities it must be $g(t_0) = \frac{1}{2}(\tilde{b}(t_0)+\tilde{a}(t_0)) = s$. On the other hand, it is clear that $$\max\{\|b-g\|,\|g-a\|\} \geq \max\{b(t_0)-g(t_0), g(t_0)-a(t_0), \frac{1}{2}\|b-a\|\},$$ let us see the reverse inequality.

\begin{itemize}
\item Given $t \in \ct \setminus \{t_0\}$, we have $\max\{|b(t)-g(t)|,|g(t)-a(t)|\} = \max\{|\tilde{b}(t)-g(t)|,|g(t)-\tilde{a}(t)|\} \leq \frac{1}{2}(\tilde{b}(t_0)-\tilde{a}(t_0)) = \max\{g(t_0)-a(t_0),\gamma\}$.
\item We also have $\max\{|b(t_0)-g(t_0)|,|g(t_0)-a(t_0)|\} = b(t_0)-g(t_0)$.
\end{itemize}

Hence, we deduce $$\max\{\|b-g\|,\|g-a\|\} \leq \max\{b(t_0)-g(t_0), g(t_0)-a(t_0), \frac{1}{2}\|b-a\|\}.$$

\end{proof}

\begin{theor}\label{ab}
Let $K$ be a compact space and ${(f_n)}_n$ a bounded sequence in $\cc(K)$. Def\mbox{}ine, for every $t \in K$,
  $$
  b(t) = \sup \{ \lim_{(n,\alpha)}
	    \sup_{\myatop{j\geq n}{\beta \geq \alpha}} f_j(t_\beta)
		    : {(t_\alpha)}_{\alpha \in \Lambda} \textrm{ is a net converging to } t \},
  $$
  $$
  a(t) = \inf \{ \lim_{(n,\alpha)}
	    \inf_{\myatop{j\geq n}{\beta \geq \alpha}} f_j(t_\beta)
		  : {(t_\alpha)}_{\alpha \in \Lambda} \textrm{ is a net converging to } t \},
  $$
where we consider the limits in the directed product set $\IN\times\Lambda$ with the product order. Then the functions $a$ and $b$ are lower and upper semicontinuous respectively, and for every $f \in \cc(K)$ we have
  $$
  \limsupline_n \|f_n - f\| = \max \{ \|b - f\|, \|f - a\| \}.
  $$
Consequently the asymptotic radius of ${(f_n)}_n$ is $\frac{1}{2}\|b-a\|$ and its center is the nonempty set $$\{g \in \cc(K): \|b-g\| = \|g-a\| = \frac{1}{2}\|b-a\| \}.$$
\end{theor}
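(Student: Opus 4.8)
The plan is to first replace the net-based definitions of $a$ and $b$ by more tractable neighbourhood formulas, namely to show that for every $t\in K$
$$b(t)=\inf_{U}\limsupline_n\sup_{s\in U}f_n(s),\qquad a(t)=\sup_{U}\liminfline_n\inf_{s\in U}f_n(s),$$
where $U$ runs over the neighbourhoods of $t$. For a net $(t_\alpha)\to t$ write $\ell=\lim_{(n,\alpha)}\sup_{j\geq n,\beta\geq\alpha}f_j(t_\beta)$ for the (decreasing, hence convergent) quantity defining $b(t)$. The inequality $\leq$ is routine: if $U\ni t$ then $t_\beta\in U$ eventually, so $\sup_{j\geq n,\beta\geq\alpha}f_j(t_\beta)\leq\sup_{j\geq n}\sup_{s\in U}f_j(s)$ for $\alpha$ large, and passing to the limit gives $\ell\leq\limsupline_n\sup_{s\in U}f_n(s)$; taking the infimum over $U$ and then the supremum over nets yields $\leq$. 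For the reverse inequality I would exhibit a single net attaining the right-hand side: index it by the pairs $(U,N)$ (neighbourhoods of $t$ ordered by reverse inclusion, times $\IN$) and, for each pair, pick $n_{(U,N)}\geq N$ and $s_{(U,N)}\in U$ with $f_{n_{(U,N)}}(s_{(U,N)})$ within $1/N$ of the right-hand side. The points $s_{(U,N)}$ then form a net converging to $t$ whose associated limit is at least the right-hand side. I expect this diagonal net construction to be the main technical obstacle, since one must verify both that the chosen points converge to $t$ and that the double limit over $\IN\times D$ behaves monotonically as required.

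Granting these formulas, semicontinuity is immediate: given $\varepsilon>0$, choose $U_0\ni t_0$ with $\limsupline_n\sup_{s\in U_0}f_n(s)<b(t_0)+\varepsilon$; since $U_0$ is a neighbourhood of each of its points, $b(t)<b(t_0)+\varepsilon$ for all $t\in U_0$, which is exactly upper semicontinuity of $b$, and dually $a$ is lower semicontinuous. The same formulas give $a\leq b$ at once, and $b-a$ is bounded because $(f_n)_n$ is.

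Next I would prove the central identity. Using $a\leq b$ one checks pointwise that $\max\{|b(t)-f(t)|,|f(t)-a(t)|\}=\max\{b(t)-f(t),f(t)-a(t)\}$, so it suffices to show $\limsupline_n\|f_n-f\|=\sup_t\max\{b(t)-f(t),f(t)-a(t)\}=:R$. For the inequality $\geq$, fix $t$ and $\varepsilon>0$ and choose $U\ni t$ with $|f(s)-f(t)|<\varepsilon$ on $U$; then $\sup_{s\in U}f_n(s)\leq f(t)+\varepsilon+\|f_n-f\|$, and applying $\limsupline_n$ together with the formula for $b$ gives $b(t)-f(t)\leq\limsupline_n\|f_n-f\|$, the estimate $f(t)-a(t)\leq\limsupline_n\|f_n-f\|$ being symmetric. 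For $\leq$ I would argue by contradiction: if $\limsupline_n\|f_n-f\|>R+3\varepsilon$, there are infinitely many $n$ and points $s_n$ with, say, $f_n(s_n)-f(s_n)>R+3\varepsilon$ (the opposite sign uses $a$); a cluster point $t^*$ of $(s_n)$ exists by compactness of $K$, and shrinking $U\ni t^*$ so that $f$ varies by less than $\varepsilon$ on $U$ forces $\limsupline_n\sup_{s\in U}f_n(s)\geq R+2\varepsilon+f(t^*)$ for every such $U$, whence $b(t^*)-f(t^*)\geq R+2\varepsilon>R$, a contradiction. Compactness enters precisely here, to produce $t^*$.

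Finally I would assemble the conclusion. Since $a$ is lower semicontinuous, $b$ upper semicontinuous, $a\leq b$ and $b-a$ bounded, Lemma \ref{ndist} provides a continuous $g$ with $\|b-g\|=\|g-a\|=\tfrac12\|b-a\|$, and the identity then gives $\limsupline_n\|f_n-g\|=\tfrac12\|b-a\|$. On the other hand, for arbitrary $f$ the identity together with the triangle inequality $\|b-a\|\leq\|b-f\|+\|f-a\|\leq 2\max\{\|b-f\|,\|f-a\|\}$ shows $\limsupline_n\|f_n-f\|\geq\tfrac12\|b-a\|$. Hence $\asra((f_n)_n)=\tfrac12\|b-a\|$, and a function $f$ lies in the center exactly when $\max\{\|b-f\|,\|f-a\|\}=\tfrac12\|b-a\|$; since the two norms then sum to at least $\|b-a\|$ while each is at most $\tfrac12\|b-a\|$, this is equivalent to $\|b-f\|=\|f-a\|=\tfrac12\|b-a\|$. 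This yields the stated description $\{g\in\cc(K):\|b-g\|=\|g-a\|=\tfrac12\|b-a\|\}$ of $\asce((f_n)_n)$, which is nonempty by the function $g$ produced above.
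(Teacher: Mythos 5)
Your proof is correct and, at its core, follows the same route as the paper: semicontinuity of $a$ and $b$, the identity $\limsupline_n\|f_n-f\|=\max\{\|b-f\|,\|f-a\|\}$ obtained from a pointwise ``easy'' inequality in one direction and a compactness/cluster-point argument in the other, and then Lemma \ref{ndist} for the endgame. The one genuine difference is your preliminary reformulation $b(t)=\inf_{U}\limsupline_n\sup_{s\in U}f_n(s)$ (and dually for $a$), with $U$ running over the neighbourhoods of $t$. The paper never states this; it works with the net definitions throughout and proves upper semicontinuity of $b$ by contradiction, building a diagonal net indexed by $\IN\times\cb$. Your reformulation isolates exactly that diagonal construction in one place --- and your sketch of it is sound: the matched pairs $(n_{(U,N)},(U,N))$ already force $\sup_{j\geq n,\,\beta\geq\alpha}f_j(s_\beta)\geq R-1/N$ for arbitrarily large $N$ at each fixed $(n,\alpha)$, so the decreasing limit is at least $R$ --- after which semicontinuity, $a\leq b$, and the easy half of the identity all reduce to one-line monotonicity observations. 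This is arguably a cleaner organization at the cost of one extra lemma; the only detail to make explicit is that in the semicontinuity step $U_0$ should be taken open (harmless, since open neighbourhoods are cofinal among all neighbourhoods and the quantity is monotone in $U$). The compactness step and the concluding use of $\|b-a\|\leq\|b-f\|+\|f-a\|\leq 2\max\{\|b-f\|,\|f-a\|\}$ match the paper's argument.
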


\begin{proof}

Assume $b$ is not upper semicontinuous for some $t \in K$, and denote by $\cb$ the set of open neighbourhoods of $t$. Then there exists $\vareps > 0$ such that for each $V\in\cb$ there exists $t_V$ such that $b(t_V) > b(t) + \vareps$. By def\mbox{}inition of $b$, for each $V\in\cb$ there exists a net $t_{\alpha,V} \stackrel{\alpha}{\to} t_{V}$ satisfying $\disp \lim_{(n,\alpha)} \sup_{\myatop{j\geq n}{\beta \geq \alpha}} f_j(t_{\beta,V}) > b(t) + \vareps$.

Thus, we can obtain inf\mbox{}inite sets $M_V \subseteq \IN$ and $\{u_{m,V}: m \in M_V\} \subseteq V$ such that $f_m(u_{m,V}) > b(t) + \vareps$. For the sake of simplicity in the notation, def\mbox{}ine $u_{m,V} = t$ whenever $m \in \IN \setminus M_V$.

Consider the directed set $\Lambda = \IN \times \cb$ with the product order. It is clear that $u_{\alpha\in\Lambda} \to t$ and we also have (observe that the limit is taken in $\IN \times \Lambda = \IN^2 \times \cb$)
  $$
  \lim_{(n,\alpha)} \sup_{\myatop{j \geq n}{\beta\geq\alpha}} f_j(u_\beta) > b(t) + \vareps.
  $$

This contradiction proves that $b$ is upper semicontinuous. The proof for $a$ is entirely analogous.

Now def\mbox{}ine $u = \disp \limsupline_n \|f_n - f \|$, there exists a sequence ${(t_{n_j})}_j \subseteq K$ such that one of the following equalities holds:
\begin{itemize}
\item $\disp \lim_j (f_{n_j}(t_{n_j}) - f(t_{n_j})) = u$
\item $\disp \lim_j (f(t_{n_j}) - f_{n_j}(t_{n_j})) = u$
\end{itemize}
and, by compactness, there exists a cluster point of ${(t_{n_j})}_j $, call it $t$. Then again, one of the following equalities must hold:
\begin{itemize}
\item $\disp \limsupline_j (f_{n_j}(t_{n_j}) - f(t)) \geq u$
\item $\disp \limsupline_j (f(t) - f_{n_j}(t_{n_j})) \geq u$
\end{itemize}

In the f\mbox{}irst case we obtain $b(t) \geq f(t) + u$ and in the second, $a(t) \leq f(t) - u$. Considering both inequalities we arrive at
$$\max \{ \|b - f\|, \|f - a\| \} \geq \limsupline_n \|f_n - f\|.$$

For the converse inequality, f\mbox{}ix $t \in K$. Taking into account how $b$ is def\mbox{}ined, we have
$$a(t)-f(t)\leq b(t)-f(t) \leq \limsupline_n \|f_n-f\|$$
and analogously
$$f(t)-b(t) \leq f(t)-a(t) \leq \limsupline_n \|f_n-f\|.$$

Combining those inequalities we obtain $$\max \{ \|b - f\|, \|f - a\| \} \leq \limsupline_n \|f_n - f\|.$$ The rest of the proof is an immediate consequence of lemma \ref{ndist}.

\end{proof}

A similar version can be given in the case of $\cc_0(L)$ spaces, with only minor modif\mbox{}ications in the proof (it suf\mbox{}f\mbox{}ices to take $t_0 = \infty$ and $s=0$ in lemma \ref{ndist}):

\begin{theor}\label{ablocally}
Let $L$ be a locally compact, noncompact space and ${(f_n)}_n$ a bounded sequence in $\cc_0(L)$. Let $K$ be the one-point compactif\mbox{}ication of $L$, consider that each $f_n$ is def\mbox{}ined in $K$ by saying $f_n(\infty)=0$ and def\mbox{}ine $a, b :K \to \IR$ as in theorem \ref{ab}.

Then the functions $a$ and $b$ are lower and upper semicontinuous respectively, and for every $f \in \cc(K)$ we have
$$\limsupline_n \|f_n - f\| = \max \{ \|b - f\|, \|f - a\| \}.$$ Consequently the asymptotic radius of ${(f_n)}_n$ is $\max\{b(\infty),-a(\infty),\frac{1}{2}\|b-a\|\}$ and its center is the nonempty set $$\{g \in \cc_0(L): \max\{\|b-g\|,\|g-a\|\} = \max\{b(\infty),-a(\infty),\frac{1}{2}\|b-a\|\} \}.$$
\end{theor}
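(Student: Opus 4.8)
The plan is to reduce everything to the compact case already settled in Theorem \ref{ab}. Since $K$ is the one-point compactification of a locally compact Hausdorff space, it is compact Hausdorff and therefore normal; moreover, because each $f_n$ vanishes at infinity, the extension $f_n(\infty)=0$ is continuous, so $(f_n)_n$ is a genuine bounded sequence in $\cc(K)$. The functions $a,b$ are defined by the very same formulae, so the arguments of Theorem \ref{ab} apply verbatim: $b$ is upper and $a$ lower semicontinuous, and for every $f\in\cc(K)$ one has $\limsupline_n\|f_n-f\| = \max\{\|b-f\|,\|f-a\|\}$. Nothing in that part uses anything special about the point $\infty$, so I would simply invoke Theorem \ref{ab} rather than repeat the argument.

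The new content lies entirely in computing the radius and center inside $\cc_0(L)$ rather than $\cc(K)$. First I would record the inequality $a(\infty)\leq 0\leq b(\infty)$: evaluating the defining formulae on the constant net at $\infty$ and using $f_n(\infty)=0$ gives $\lim_{(n,\alpha)}\sup_{j\geq n}f_j(\infty)=0$, whence $b(\infty)\geq 0$ and symmetrically $a(\infty)\leq 0$. Next, I would observe that for $f\in\cc_0(L)$, viewed as an element of $\cc(K)$ with $f(\infty)=0$, the supremum norm over $L$ coincides with the supremum norm over $K$, so that $\asra((f_n)_n)=\inf\{\max\{\|b-f\|,\|f-a\|\} : f\in\cc(K),\ f(\infty)=0\}$.

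For the lower bound, evaluating at $\infty$ gives $\|b-f\|\geq b(\infty)$ and $\|f-a\|\geq -a(\infty)$ (both nonnegative by the previous step), while the triangle inequality gives $\max\{\|b-f\|,\|f-a\|\}\geq\frac{1}{2}\|b-a\|$; hence the infimum is at least $\max\{b(\infty),-a(\infty),\frac{1}{2}\|b-a\|\}$. For the matching upper bound, and simultaneously for the nonemptiness of the center, I would apply the ``moreover'' clause of Lemma \ref{ndist} with $t_0=\infty$ and $s=0$ (legitimate precisely because $0\in[a(\infty),b(\infty)]$), producing a continuous $g:K\to\IR$ with $g(\infty)=0$ and $\max\{\|b-g\|,\|g-a\|\}=\max\{b(\infty),-a(\infty),\frac{1}{2}\|b-a\|\}$. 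Since $g(\infty)=0$, its restriction lies in $\cc_0(L)$, so this value is attained and equals the radius; the center is then exactly the set of $g\in\cc_0(L)$ realizing it.

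The only real obstacle, and the reason the radius formula differs from the compact case, is that the minimization is now constrained to functions vanishing at $\infty$: one is no longer free to place $g(\infty)$ at the midpoint $\frac{1}{2}(a(\infty)+b(\infty))$, and the forced value $g(\infty)=0$ may inflate the attainable radius by the terms $b(\infty)$ and $-a(\infty)$. The ``moreover'' part of Lemma \ref{ndist} is tailored exactly to this one-point constraint, which is why the hint to take $t_0=\infty$, $s=0$ is essentially the whole story.
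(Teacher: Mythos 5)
Your proposal is correct and follows exactly the route the paper intends: the paper gives no separate proof of this theorem, only the remark that one repeats the argument of Theorem \ref{ab} and invokes the ``moreover'' clause of Lemma \ref{ndist} with $t_0=\infty$, $s=0$. You have supplied precisely the details the paper leaves implicit (the observation $a(\infty)\leq 0\leq b(\infty)$, the lower bound by evaluating at $\infty$, and the attainment via a $g$ with $g(\infty)=0$ whose restriction lies in $\cc_0(L)$), all of which check out.
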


L. Vesel\'{y} (\cite{Ve}) proved that certain hyperplanes of $c_0$ are not $cc$. From this and the separability of $c_0$, it is not dif\mbox{}f\mbox{}icult to deduce (see \cite{LiRa}) that Vesel\'{y}'s examples are not $sacc$ either. Therefore, there are $2$-codimensional subspaces of $c$ which are not $sacc$. We do not know whether every $1$-codimensional subspace of a $\cc(K)$ space is $sacc$.

Next, we will apply the previous results to deduce Lim's expressions for the radii:

\begin{theor}[T. C. Lim, \cite{Lim83}]
Let $\sucx$ be a sequence in $c_0$, $c$ or $\ell_\infty$. Its asymptotic radius is, respectively:
  \begin{equation}\label{eq:limc0}
  \asra(\sucx)=\max\left\{\frac{1}{2} \lim_m \sup_k \left(\sup_{n\geq m} x_n(k)-\inf_{n\geq m} x_n(k)\right), \lim_m \limsupline_k \sup_{n\geq m}|x_n(k)|\right\},
  \end{equation}

  \begin{equation}\label{eq:limc}
  \asra(\sucx)=\frac{1}{2} \max\left\{\lim_m \sup_k \left(\sup_{n\geq m} x_n(k)-\inf_{n\geq m} x_n(k)\right), \lim_m (\limsupline_k \sup_{n\geq m} x_n(k)-\liminfline_k \inf_{n\geq m} x_n(k))\right\},
  \end{equation}

  \begin{equation}\label{eq:liml}
  \asra(\sucx)=\frac{1}{2} \lim_m \sup_k \left(\sup_{n \geq m} x_n(k) - \inf_{n \geq m} x_n(k)\right).
  \end{equation}
\end{theor}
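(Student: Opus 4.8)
The plan is to realize each of the three spaces as a $\cc(K)$ or $\cc_0(L)$ space and then read off the radius from Theorems \ref{ab} and \ref{ablocally}. Indeed, $c_0 = \cc_0(\IN)$ with $\IN$ discrete (its one-point compactification $K$ being the convergent-sequence space $\IN\cup\{\infty\}$), $c = \cc(\IN\cup\{\infty\})$ for the same $K$, and $\ell_\infty = \cc(\beta\IN)$, the Stone--\v{C}ech compactification of the discrete integers. In every case the points of $\IN$ are isolated, so for $t=k\in\IN$ the only nets converging to $k$ are eventually constant; the defining limits in Theorem \ref{ab} then collapse and give $b(k) = \limsupline_n x_n(k)$ and $a(k) = \liminfline_n x_n(k)$. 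Writing $g_m(k) = \sup_{n\geq m} x_n(k) - \inf_{n\geq m} x_n(k)$, which is nonincreasing in $m$, this yields $b(k)-a(k) = \lim_m g_m(k)$ and hence $\sup_k (b(k)-a(k)) \leq \lim_m \sup_k g_m(k)$, the quantity appearing in all three of Lim's formulae.

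Next I would compute $a$ and $b$ at the added point(s) of the compactification. For the one-point compactifications governing $c$ and $c_0$, a net converges to $\infty$ exactly when it eventually leaves every finite subset of $\IN$; taking the supremum over such nets in the definition of $b$ (and the infimum for $a$), I expect to obtain $b(\infty) = \lim_m \limsupline_k \sup_{n\geq m} x_n(k)$ and $a(\infty) = \lim_m \liminfline_k \inf_{n\geq m} x_n(k)$, so that $b(\infty)-a(\infty)$ is precisely the second bracket in \eqref{eq:limc}. The point is that for the single ideal point $\infty$ the sup-net and the inf-net may live on different coordinates, which is what produces the cross term $\limsupline_k \sup_{n\geq m} x_n(k) - \liminfline_k \inf_{n\geq m} x_n(k)$ rather than a coordinatewise oscillation.

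With these values in hand the formulae for $c$ and $c_0$ follow by substitution. For $c$, Theorem \ref{ab} gives $\asra(\sucx) = \frac12\|b-a\| = \frac12\max\{\sup_k(b(k)-a(k)),\, b(\infty)-a(\infty)\}$; the extra oscillation contained in $\lim_m\sup_k g_m(k)$ beyond $\sup_k(b(k)-a(k))$ comes from coordinates escaping to infinity and, by a short limsup/liminf estimate, is dominated by $b(\infty)-a(\infty)$, so the first entry of the max may be replaced by $\lim_m\sup_k g_m(k)$, giving \eqref{eq:limc}. For $c_0$, Theorem \ref{ablocally} gives $\asra(\sucx)=\max\{b(\infty),-a(\infty),\frac12\|b-a\|\}$; using $\limsupline_k \max\{u_k,v_k\}=\max\{\limsupline_k u_k,\limsupline_k v_k\}$ one checks $\max\{b(\infty),-a(\infty)\}=\lim_m\limsupline_k\sup_{n\geq m}|x_n(k)|$, and the same domination argument replaces $\frac12\|b-a\|$ by $\frac12\lim_m\sup_k g_m(k)$, which is \eqref{eq:limc0}.

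The hard case is $\ell_\infty$, where $K=\beta\IN$ and Theorem \ref{ab} gives $\asra(\sucx)=\frac12\|b-a\|=\frac12\sup_{t\in\beta\IN}(b(t)-a(t))$; the task is to prove the clean identity $\|b-a\|=\lim_m\sup_k g_m(k)$ with no cross term. For the inequality $\leq$ I would use that for large $m$ we have $g_m(k)\leq \lim_m\sup_k g_m(k)+\vareps$ \emph{uniformly} in $k$, which should force $b(t)-a(t)$ to stay within $\vareps$ of the right-hand side at every $t$. For $\geq$ I would, given coordinates $k_m$ nearly attaining $\sup_k g_m(k)$, either find a fixed coordinate hit infinitely often (yielding a large interior oscillation) or let the $k_m$ escape and take an ultrafilter limit $p\in\beta\IN\setminus\IN$, building one net into $b(p)$ and another into $a(p)$ along the same $p$. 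This last construction is where I expect the main difficulty: unlike the single point $\infty$ of the one-point compactification, a free ultrafilter fixes a single direction, so one must verify that the high and low values can be routed through the \emph{same} point $p$, which is exactly why the $\ell_\infty$ radius carries no cross term while the $c$ radius does.
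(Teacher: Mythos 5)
Your overall route is the paper's own: realize $c_0$, $c$, $\ell_\infty$ as $\cc_0(\IN)$, $\cc(\IN\cup\{\infty\})$ and $\cc(\beta\IN)$ and read the radius off Theorems \ref{ab} and \ref{ablocally}. Your evaluation of $a,b$ at isolated points and at $\infty$, and the reduction of \eqref{eq:limc0} and \eqref{eq:limc} to the identity $\max\{\sup_k(b(k)-a(k)),\gamma\}=\max\{\alpha,\gamma\}$ (with $g_m(k)=\sup_{n\geq m}x_n(k)-\inf_{n\geq m}x_n(k)$, $\alpha=\lim_m\sup_k g_m(k)$ and $\gamma=b(\infty)-a(\infty)$), all match the paper. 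The identity itself rests on the dichotomy you only gesture at with ``a short limsup/liminf estimate'': choosing $k_m$ nearly attaining $\sup_k g_m(k)$, either some coordinate recurs infinitely often (and the oscillation is already counted in $\sup_k(b(k)-a(k))$) or the $k_m$ escape to infinity (and it is absorbed by $\gamma$); this is the paper's equations for $\max\{\beta,\gamma\}=\max\{\alpha,\gamma\}$ and is recoverable from what you wrote, so I would not count it as a gap.

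The genuine problem is in the $\ell_\infty$ case, where you have the roles of the two inequalities reversed. Your argument for $\|b-a\|\leq\alpha$ --- ``for large $m$, $g_m(k)\leq\alpha+\vareps$ uniformly in $k$, which should force $b(t)-a(t)\leq\alpha+\vareps$'' --- is not a valid implication: a uniform bound on single-coordinate oscillations says nothing about $b(t)-a(t)$ when the net witnessing $b(t)$ and the net witnessing $a(t)$ live on disjoint sets of coordinates. Indeed, for the constant sequence $x_n(k)=(-1)^k$ one has $\alpha=0$ while $b(\infty)-a(\infty)=2$ in the one-point compactification, so the step fails for a general compactification and must use something specific to $\beta\IN$. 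That something is exactly the ``same coordinate'' phenomenon you describe, but you attach it to the wrong direction: given $t\in\beta\IN$ and $m$, the set of $k$ with $\sup_{n\geq m}x_n(k)>b(t)-\vareps$ and the set of $k$ with $\inf_{n\geq m}x_n(k)<a(t)+\vareps$ both have $t$ in their closure, and since disjoint subsets of $\IN$ have disjoint closures in $\beta\IN$ they must share a coordinate $k_m$; then $g_m(k_m)>b(t)-a(t)-2\vareps$, which is the paper's argument. Conversely, the direction $\|b-a\|\geq\alpha$, which you single out as the main difficulty, is the easy one: $g_{m}(k_{m})$ is an oscillation at a \emph{single} coordinate, so one net running through the $k_m$'s and clustering at any $t_0\in\beta\IN$ simultaneously witnesses a high value for $b(t_0)$ and a low value for $a(t_0)$; no routing problem arises there.
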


\begin{proof}

Let $\sucx \in \ell_\infty$ and call
\begin{itemize}
\item For every $\mnat$, $\alpha_m = \disp \sup_k (\sup_{n\geq m} x_n(k)-\inf_{n\geq m} x_n(k))$.
\item $\alpha = \disp \lim_m \alpha_m = \inf_m \alpha_m$.
\item For every $\knat$, $\beta_k = \disp \limsupline_n x_n(k) - \liminfline_n x_n(k)$.
\item $\beta = \disp \sup_k \beta_k$.
\item $\gamma = \disp \inf_m \left(\sup_{\myatop{n\geq m}{k\geq m}} x_n(k)-\inf_{\myatop{n\geq m}{k\geq m}} x_n(k)\right) = \limsupline_{n,k} x_n(k) - \liminfline_{n,k} x_n(k)$.
\item $\delta = \disp \limsupline_{n,k}|x_n(k)|$.
\end{itemize}
Given $\vareps > 0$, for every $\mnat$ there exists $k_m \in \IN$ satisfying
  $$
  \sup_{n\geq m} x_n(k_m) - \inf_{n \geq m} x_n(k_m) > \alpha_m - \frac{\vareps}{2}\geq\alpha - \frac{\vareps}{2}.
  $$
Denote $F=\{k_m : \mnat\}$. On the one hand, if $F$ is f\mbox{}inite, then we have
  $$
  \inf_{\mnat} \sup_{k \in F}\left(\sup_{n\geq m} x_n(k) - \inf_{n \geq m} x_n(k)\right) \geq \alpha - \frac{\vareps}{2};
  $$
besides, for every $\knat$ there exists $m_k \in \IN$ such that 
  $$
  \sup_{n\geq m_k} x_n(k) - \inf_{n\geq m_k} x_n(k) \leq \beta_k + \frac{\vareps}{2}
  $$
which implies, if we take $m_0 = \max\{m_k : k \in F\}$, that 
  $$
  \sup_{k \in F}\left(\sup_{n\geq m_0} x_n(k) - \inf_{n\geq m_0} x_n(k) \right) \leq \beta + \frac{\vareps}{2}
  $$
and therefore $\alpha \leq \beta + \vareps$. On the other hand, if $F$ is inf\mbox{}inite then there exist two strictly increasing sequences ${(m_j)}_j$, ${(k_j)}_j$ such that 
  $$
  \alpha - \frac{\vareps}{2} \leq \inf_j \left(\sup_{n\geq m_j} x_n(k_j) - \inf_{n \geq m_j} x_n(k_j)\right) \leq \gamma \leq 2\delta.
  $$
Joining the two possibilities we deduce that $\alpha \leq \disp\max\{\beta, \gamma\}$. It is straightforward to see that $\alpha_m \geq \beta_k$ for every $m, k \in \IN$ and therefore $\beta \leq \alpha$. We deduce that 

\begin{equation} \label{eqc}
\max\{\beta, \gamma\} = \max\{\alpha, \gamma\}
\end{equation}
and

\begin{equation} \label{eqc0}
\max\{\beta, 2\delta\} = \max\{\alpha, 2\delta\}.
\end{equation}

In the case of $c$, we can identify $c$ with $\cc(\IN\cup\{\infty\})$ where $\IN\cup\{\infty\}$ is the one-point compactification of $\IN$. It is easy to see that Lim's expression equals $\frac{1}{2}\max\{\alpha, \gamma\}$ but applying Theorem~\ref{ab} we obtain that $\asra(\sucx)=\frac{1}{2}\|b-a\| = \frac{1}{2}\max\{\sup_{\knat} (b(k)-a(k)),b(\infty)-a(\infty)\} = \frac{1}{2}\max\{\beta,\limsupline_{n,k} x_n(k) - \liminfline_{n,k} x_n(k)\} = \frac{1}{2}\max\{\beta,\gamma\}$, so equation (\ref{eqc}) provides the desired equality~\eqref{eq:limc}.

In the case of $c_0$, seen as $\cc_0(\IN)$, it is easy to see that Lim's expression equals $\max\{\frac{1}{2}\alpha, \delta\}$ while by Theorem~\ref{ablocally} $\max\{\frac{1}{2}\|b-a\|,b(\infty),-a(\infty)\} = \max\{\frac{1}{2}\|b-a\|,\limsupline_{n,k} x_n(k),\limsupline_{n,k} -x_n(k)\} = \max\{\frac{1}{2} \beta ,\delta\}$, so equation (\ref{eqc0}) provides the desired equality~\eqref{eq:limc0}.

In the case of $\ell_\infty$ we identify this space with $\cc(\beta\IN)$ so we can apply Theorem~\ref{ab}. F\mbox{}ix $\vareps > 0$ and $t \in \beta\IN$. Call $\cv$ the set of neighbourhoods of $t$, for a given $\mnat$ and every $V \in \cv$ there exist $s_V, u_V \in \IN \cap V$ such that $\sup_{n\geq m} x_n(s_V) > b(t) - \vareps$ and $\inf_{n \geq m} x_n(u_V) < a(t) + \vareps$. We have that $t$ is a limit point of both ${(s_V)}_{V \in \cv}$ and ${(u_V)}_{V \in \cv}$. Since we are dealing with a Stone-\v{C}ech compactif\mbox{}ication, there must exist $k_m \in \IN$ which is both an $s_{V_1}$ and a $u_{V_2}$, thus having
  $$
  \sup_{n\geq m} x_n(k_m) > b(t) - \vareps \quad \textrm{ and } \quad \inf_{n \geq m} x_n(k_m) < a(t) + \vareps.
  $$
This implies, as $m$ was arbitrary, 
  $$
  \lim_n\sup_{\knat} \left(\sup_{n\geq m} x_n(k) - \inf_{n \geq m} x_n(k)\right) =\inf_{\mnat}\sup_{\knat} \left(\sup_{n\geq m} x_n(k) - \inf_{n \geq m} x_n(k)\right) \geq b(t)-a(t) - 2\vareps.
  $$
But this holds for every $\vareps>0$ and $t\in \beta \IN$, so by Theorem~\ref{ab} the right-hand side of equality~\eqref{eq:limc} is greater than or equal to the left-hand side so we have to prove the opposite inequality.

Now let $\vareps > 0$. For every $\mnat$ there exists $k_m$ such that 
  $$
  \sup_{n\geq m} x_n(k_m) - \inf_{n\geq m} x_n(k_m) \geq \alpha_m - \vareps
  $$ 
and if we consider the sequence $(k_1,k_2,\dots)$ it must have a subnet ${(t_r)}_{r \in \Lambda} \subseteq \IN$ converging to certain $t_0 \in \beta \IN$. This implies that there exist $n_0 \in \IN$ and $r_0 \in \Lambda$ satisfying 
  $$
  \|b-a\| \geq b(t_0) - a(t_0) \geq \sup_{\myatop{j \geq n_0}{r \geq r_0}} x_j(t_r) - \inf_{\myatop{j \geq n_0}{r \geq r_0}} x_j(t_r) - \vareps
  $$ 
but there exist $r_1 \geq r_0$ and $m_0 \geq n_0$ with $t_{r_1} = k_{m_0}$, yielding
  $$
  \sup_{\myatop{j \geq n_0}{r \geq r_0}} x_j(t_r) - \inf_{\myatop{j \geq n_0}{r \geq r_0}} x_j(t_r) - \vareps \geq \sup_{j \geq m_0} x_j(k_{m_0}) - \inf_{j \geq m_0} x_j(k_{m_0}) - \vareps \geq \alpha_{m_0} - 2\vareps \geq \alpha - 2 \vareps.
  $$
Again $\vareps$ was arbitrary and we arrive at the opposite inequality.
\end{proof}

To f\mbox{}inish this section, let us mention that there is no known formula for the radius and center in $\ell_1$, and it is also unknown whether $L_1[0,1]$ is asymptotically center-complete (both are stated as open problems in \cite{Lim83}). In $\cite{Lim80}$ it was proved that $\ell_1$ is asymptotically center-complete. The center completeness of $L_1[0,1]$ was proved in \cite{Ga}.

\section{Continuity properties of the asymptotic center}

As we mentioned in the introduction, several authors have studied the continuity properties of the center map in center-complete spaces, with respect to the Hausdorf\mbox{}f metric and frequently using Michael's theorem (\cite{Mi}) to obtain a continuous selector. Perhaps a good starting point for the interested reader would be the paper by D. Amir and J. Mach (\cite{AmMa}), which is a very well-written and detailed account. Here we will try to study the corresponding sequential properties; for this purpose, f\mbox{}irst we introduce an analogous of the Hausdorf\mbox{}f metric which seems suitable for sequences.

Given a sequence $\sucx = {(x_n)}_n$, we will write its $n$-th tail as $$C_n(\sucx) = \{x_m: m\geq n\}$$ and by means of the tails we can def\mbox{}ine a pseudometric in $\ell_\infty(X)$:
\begin{equation*}
\begin{split}
  \disp d(\sucx, \sucy) = \inf \{\vareps>0 : &\textrm{ given } \nnat \textrm{ there exists } \mnat \textrm{ such that }  \\
  &C_m(\sucx) \subseteq C_n(\sucy) + \vareps B_X \textrm{ and } C_m(\sucy) \subseteq C_n(\sucx) + \vareps B_X\}.
\end{split}
\end{equation*}

We will say that $\sucx \sim \sucy$ whenever $d(\sucx, \sucy)=0$, and accordingly def\mbox{}ine $Y=\ell_\infty(X) / \sim$. As usual, elements of $Y$ will be denoted by any class representative, i. e. $[\sucx]$. $Y$ is a metric space with the distance $d([\sucx],[\sucy])=d(\sucx,\sucy)$.

\begin{propo}
$\left(\ell_\infty(X) / \sim,d\right)$ is a complete metric space.
\end{propo}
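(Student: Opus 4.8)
Since the lines preceding the statement already record that $d$ is a genuine metric on $Y=\ell_\infty(X)/\!\sim$, the plan is to prove only completeness. I would start from a Cauchy sequence ${([\sucx^{(k)}])}_k$ in $Y$ and, using the standard fact that a Cauchy sequence converges as soon as one of its subsequences does, pass to a subsequence with $d(\sucx^{(k)},\sucx^{(k+1)})<2^{-k}$ for every $k$. Unwinding the definition of $d$, for each $k$ and each $\nnat$ I fix an index $\mu_k(n)\geq n$ realizing the two tail inclusions
$$C_{\mu_k(n)}(\sucx^{(k)})\subseteq C_n(\sucx^{(k+1)})+2^{-k}B_X,\qquad C_{\mu_k(n)}(\sucx^{(k+1)})\subseteq C_n(\sucx^{(k)})+2^{-k}B_X.$$
It is convenient to reread $d$ through the closed tails $A^{(k)}_n=\adh{C_n(\sucx^{(k)})}$: writing $e(S,T)=\sup_{s\in S}\mathrm{dist}(s,T)$ for the one-sided Hausdorff excess, a short computation gives $d(\sucx^{(j)},\sucx^{(k)})=\max\{\sup_n\inf_m e(A^{(j)}_m,A^{(k)}_n),\ \sup_n\inf_m e(A^{(k)}_m,A^{(j)}_n)\}$, so that $d$ is exactly the distance between the two decreasing filters of tails.

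The key step is to construct a limit filter, and here I would exploit the completeness of $X$. The inclusions above let me build coherent threads: starting from a far-out point $w_k=x^{(k)}_{j}\in C_{r_k}(\sucx^{(k)})$, the first inclusion provides $w_{k+1}\in C_{r_{k+1}}(\sucx^{(k+1)})$ with $\|w_k-w_{k+1}\|\le 2^{-k}$, and a bookkeeping of the depths fed to $\mu_k$ keeps the indices $r_k$ increasing to infinity. Each thread is Cauchy in the Banach space $X$, hence convergent; I then define, for every $\nnat$, the nonempty closed bounded set $L_n$ of all limits of threads whose terms satisfy $r_k\geq n$. By construction $L_{n+1}\subseteq L_n$, and telescoping $\|\lim_l w_l-w_k\|\le 2^{-k+1}$ shows, in both directions, that the excesses between the filter ${(L_n)}_n$ and the filter $(A^{(k)}_\bullet)$ are at most $2^{-k+1}$; thus ${(L_n)}_n$ is the sought limit of the tail filters.

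It then remains to realize ${(L_n)}_n$ as the tail filter of an honest bounded sequence. Since each $L_n$ is the closure of a set assembled from countably many tails it is separable, so I pick a countable dense $D_n\subseteq L_n$ and let $\sucz$ be obtained by concatenating finite blocks, the $k$-th block listing the first $k$ elements of $D_k$. The sequence $\sucz$ is bounded (everything lies in one fixed ball), and the two excesses between $\adh{C_\bullet(\sucz)}$ and ${(L_n)}_n$ both vanish: every point of a deep tail of $\sucz$ lies in some $L_{k'}\subseteq L_n$, which kills the forward excess, while letting the block index grow makes the union of the blocks dense in $L_m$, which kills the backward one. Hence $d(\sucx^{(k)},\sucz)\to 0$, and, since a Cauchy sequence converges whenever a subsequence does, the original Cauchy sequence converges, proving that $(Y,d)$ is complete.

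The main obstacle is precisely the passage through $(L_n)$ and its realization, and it is worth stressing why the naive route fails. One cannot simply interleave the sequences $\sucx^{(k)}$: a point coming from an early $\sucx^{(k)}$ stays a fixed distance $\sim 2^{-k+1}$ from the deep tails of the later ones, so in any interleaving the forward excess $e(\adh{C_\bullet(\sucz)},A^{(K)}_\bullet)$ would not tend to $0$ as $K\to\infty$. Passing first to the genuine limit sets $L_n$ — where completeness of $X$ collapses the residual gaps along coherent threads — is exactly what removes this defect, after which the block construction reproduces the filter faithfully. Extra care is needed to keep the thread depths increasing, so that ${(L_n)}_n$ is honestly decreasing and matches the filters $(A^{(k)}_\bullet)$ cofinally.
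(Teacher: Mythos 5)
Your overall skeleton is the same as the paper's (pass to a subsequence with $d(\sucx^{(k)},\sucx^{(k+1)})<2^{-k}$, assemble a candidate limit by concatenating finite blocks drawn from deeper and deeper tails, and use that a Cauchy sequence converges once a subsequence does), but the middle of your argument has a genuine gap in the realization step. You define $L_n$ as the set of limits of coherent threads and then claim that the sequence $\sucz$ obtained by listing, in the $k$-th block, the first $k$ elements of a countable dense subset $D_k\subseteq L_k$ satisfies $e\bigl(L_m,\adh{C_n(\sucz)}\bigr)\to 0$ because ``letting the block index grow makes the union of the blocks dense in $L_m$.'' This is not justified and is false in general: $D_k$ is dense in $L_k$, which may be a proper subset of $L_m$, so points of $L_m\setminus L_{m+1}$ are approached only by the single finite block taken from $D_m$; and even for points of $\bigcap_k L_k$, the first $k$ elements of $D_k$ need not come within $\vareps$ of a given point, since the sets $L_k$ need not be totally bounded (they are only separable) and the enumerations of the various $D_k$ are unrelated. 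So the backward excess between the tail filter of $\sucz$ and the filter ${(L_n)}_n$ is not controlled by your construction. There are also smaller unproved assertions along the way (that the set of thread limits is closed; that every $\ell\in L_n$ is the limit of a thread whose $k$-th term already lies at depth $\geq n$ in $\sucx^{(k)}$, which is what the excess estimate against $A^{(k)}_\bullet$ actually requires), but the density claim is the one that breaks the proof as written.

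The source of the trouble is the detour through limits in $X$: once you replace the countable tails by the (possibly non--totally bounded) closed sets $L_n$, you are forced to approximate a whole closed set by finite blocks, which cannot be done. The paper's proof never takes limits in $X$ and never invokes its completeness at this stage: after arranging nested tail inclusions $C_{n_{m+1}}(\sucx_i)\subseteq C_{n_m}(\sucx_j)+\bigl(\sum_{k=\min\{i,j\}}^{\max\{i,j\}-1}2^{-k}\bigr)B_X$, it chooses each finite block $A_m\subseteq C_{n_m}(\sucx_m)$ only so as to $\bigl(\sum_{k=i}^{m-1}2^{-k}\bigr)$-approximate the \emph{finitely many} points $x_i(s)$ with $i<m$ and $n_{m+1}\leq s<n_{m+2}$ --- always possible, with no total boundedness or separability needed --- and then verifies the two inclusions $C_{n'}(\sucx_i)\subseteq C_n(\sucy)+2^{-(i-1)}B_X$ and $C_{n'}(\sucy)\subseteq C_n(\sucx_i)+2^{-(i-1)}B_X$ directly against each $\sucx_i$. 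Note also that your objection to ``naive interleaving'' misdiagnoses this method: one does not need the excess against $\sucx^{(K)}$ to vanish for a fixed $K$, only $d(\sucx^{(K)},\sucz)\to 0$ as $K\to\infty$, and the residual gap of order $2^{-K+1}$ left by interleaving finite pieces is exactly of the right size. I would recommend abandoning the sets $L_n$ and carrying out the finite-block approximation directly on the tails.
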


\begin{proof}

Take $([\sucx_n])_n$, with each $\sucx_n={(x_n(s))}_s$ a bounded sequence in $X$, such that
  $$
  d(\sucx_n,\sucx_{n+1})<\frac{1}{2^n}
  $$
for all $n$. We have to prove that this sequence converges. Fix $n_1=1$. Since $d(\sucx_1,\sucx_{2})<\frac{1}{2}$
there is $n_2>n_1$ such that 
  $$
  C_{n_2}(\sucx_{1}) \subseteq C_{n_1}(\sucx_2) + \frac{1}{2} B_X\textrm{ and }C_{n_2}(\sucx_2) \subseteq C_{n_1}(\sucx_1) + \frac{1}{2} B_X.
  $$
Suppose that we have obtained $n_m$. Since
  $$
  d(\sucx_i,\sucx_{j})<\sum_{k=\min\{i,j\}}^{\max\{i,j\}-1}\frac{1}{2^k}
  $$
there is $n_{m+1}$ such that
  \begin{equation}\label{eq:relacioncn}
  C_{n_{m+1}}(\sucx_{i})\subseteq C_{n_m}(\sucx_j)+\left(\sum_{k=\min\{i,j\}}^{\max\{i,j\}-1}\frac{1}{2^k}\right)B_X\;\textrm{ for }\;1\leq i,j\leq m+1.
  \end{equation}
Then for each $m\in\IN$ we can choose a f\mbox{}inite set $A_m=\{y_{p_m+1},y_{p_m+2},\dots,y_{p_{m+1}}\}\subseteq C_{n_m}(\sucx_{m})$ such that
  \begin{equation}\label{eq:xsam}
  x_i(s) \in A_m+\left(\sum_{k=i}^{m-1}\frac{1}{2^k}\right)B_X\;\text{ for }\;i<m\;\text{ and }n_{m+1}\leq s<n_{m+2}.
  \end{equation}
Def\mbox{}ine $\sucy={(y_p)}_p$. We will prove that the sequence ${([\sucx_n])}_n$ converges to $[\sucy]$. For this we prove that $d(\sucx_i,\sucy)<\frac{1}{2^{i-1}}$.

Fix $n\in\IN$, and pick $t>i$ with $p_t+1>n$. If $s\geq n_{t+1}$ there is $m\geq t$ such that $n_{m+1}\leq s<n_{m+2}$ and then by~\eqref{eq:xsam} we have that
  $$
  x_i(s)\in A_m+\frac{1}{2^{i-1}}B_X\subseteq C_{p_m+1}(\sucy)+\frac{1}{2^{i-1}}B_X\subseteq C_{n}(\sucy)+\frac{1}{2^{i-1}}B_X
  $$
so 
  \begin{equation}\label{eq:completolado1}
  C_{n'}(\sucx_i)\subseteq C_{n}(\sucy)+\frac{1}{2^{i-1}}B_X\;\text{ if }\;n'\geq n_{t+1}.
  \end{equation}
Pick now $m$ such that $n_m\geq n$ and $m>i$ and take $l\in\IN$. By~\eqref{eq:relacioncn} we have that
  \begin{equation*}
  \begin{split}
  A_{m+l}&\subseteq C_{n_{m+l}}(\sucx_{m+l}) \subseteq C_{n_{m+l-1}}(\sucx_{m+l-1})+\frac{1}{2^{m+l-1}}B_X\subseteq\cdots\subseteq\\
  &\subseteq C_{n_{m+1}}(\sucx_{m+1})+\left(\sum_{k=m+1}^{m+l-1}\frac{1}{2^k}\right)B_X
    \subseteq C_{n_{m}}(\sucx_{i})+\left(\sum_{k=i}^{m+l-1}\frac{1}{2^k}\right)B_X\subseteq\\
    &\subseteq C_n(\sucx_i)+\frac{1}{2^{i-1}}B_X.
  \end{split}
  \end{equation*}
Thus
  \begin{equation}\label{eq:completolado2}
  C_{n'}(\sucy)\subseteq C_{n}(\sucx_i)+\frac{1}{2^{i-1}}B_X\;\text{ if }\;n'> p_{m+1}.
  \end{equation}
Combining~\eqref{eq:completolado1} and~\eqref{eq:completolado2} we get that $d(\sucx_i,\sucy)<\frac{1}{2^{i-1}}$.
\end{proof}

In the proposition and conjecture that follows we try to advocate that this distance is, in certain sense, ``sharp'' concerning centers.

\begin{propo}
Let $\sucx, \sucy$ be bounded sequences in a Banach space $X$ satisfying $d(\sucx,\sucy)=0$. Then
\begin{enumerate}
\item $\asce(\sucx)=\asce(\sucy)$ and $\asra(\sucx)=\asra(\sucy)$.
\item $d(\sucx,\sucy)=0$ for each equivalent renorming of $X$.
\end{enumerate}
\end{propo}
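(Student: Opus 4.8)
The plan is to reduce part 1 to a single pointwise identity and then read off both claims from it; part 2 will follow from nothing more than the equivalence of norms. The identity I would establish is that, whenever $d(\sucx,\sucy)=0$, one has $\limsupline_n \|x_n-y\| = \limsupline_n \|y_n-y\|$ for every $y\in X$. Granting this, both statements in part 1 are immediate: the asymptotic radius is the infimum over $y$ of these (now equal) functions of $y$, so $\asra(\sucx)=\asra(\sucy)$, and the asymptotic center is the set of $y$ at which that infimum is attained, so $\asce(\sucx)=\asce(\sucy)$.

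To prove the identity I would pass through the tails. For a fixed $y$, write $r_n(\sucx,y)=\sup_{m\geq n}\|x_m-y\|=\sup\{\|z-y\|:z\in C_n(\sucx)\}$; this is non-increasing in $n$, so $\limsupline_n\|x_n-y\|=\inf_n r_n(\sucx,y)$, and likewise for $\sucy$. Now fix $\vareps>0$. Since $d(\sucx,\sucy)=0$, for each $n$ there is an $m$ with $C_m(\sucx)\subseteq C_n(\sucy)+\vareps B_X$, so every $z\in C_m(\sucx)$ lies within $\vareps$ of some $w\in C_n(\sucy)$; then $\|z-y\|\leq \vareps+\|w-y\|\leq \vareps+r_n(\sucy,y)$, and taking the supremum over $z$ gives $r_m(\sucx,y)\leq \vareps+r_n(\sucy,y)$. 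Hence $\inf_m r_m(\sucx,y)\leq \vareps+r_n(\sucy,y)$ for every $n$, and passing to the infimum over $n$ yields $\limsupline_n\|x_n-y\|\leq \vareps+\limsupline_n\|y_n-y\|$. The symmetric inclusion $C_m(\sucy)\subseteq C_n(\sucx)+\vareps B_X$ gives the reverse estimate, and letting $\vareps\to 0$ closes the identity.

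For part 2 I would only invoke norm equivalence. Let $\|\cdot\|'$ be an equivalent norm on $X$, choose $C>0$ with $\|z\|'\leq C\|z\|$ for all $z$, and note that this means $\vareps B_X\subseteq C\vareps B_X'$, where $B_X'$ is the unit ball of $\|\cdot\|'$. Consequently any inclusion $C_m(\sucx)\subseteq C_n(\sucy)+\vareps B_X$, together with its symmetric partner, remains valid with $B_X$ replaced by $B_X'$ and $\vareps$ by $C\vareps$, using the very same index $m$. Denoting by $d'$ the pseudometric built from $\|\cdot\|'$, this gives $d'(\sucx,\sucy)\leq C\,d(\sucx,\sucy)$, so $d(\sucx,\sucy)=0$ forces $d'(\sucx,\sucy)=0$.

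Both arguments are essentially routine, so I do not expect a genuine obstacle; the only place asking for care is the bookkeeping of quantifiers in the definition of $d$. Specifically, the clause ``for each $n$ there exists $m$'' must be used to bound $\inf_m r_m(\sucx,y)$ by $\vareps+r_n(\sucy,y)$ for each fixed $n$ first, and only afterwards may one take the infimum over $n$; reversing this order would be a mistake.
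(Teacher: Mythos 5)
Your proposal is correct and follows essentially the same route as the paper: the paper's proof also reduces part 1 to the pointwise estimate $|\limsupline_n\|x_n-z\|-\limsupline_n\|y_n-z\||\leq\vareps$ for each $z$ and each $\vareps>0$ (which it asserts directly from $d(\sucx,\sucy)<\vareps$, whereas you spell it out via the tail radii $r_n$), and it likewise dismisses part 2 as a direct consequence of the definition of $d$. Your version merely supplies the quantifier bookkeeping and the norm-equivalence constant that the paper leaves implicit.
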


\begin{proof}

Take $z \in X$. For every $\vareps > 0$ we have $d(\sucx,\sucy) < \vareps$ and this implies $|\limsupline_n \|x_n-z\| - \limsupline_n \|y_n-z\|| < \vareps$. Since $\vareps$ is arbitrary we deduce that $$\limsupline_n \|x_n-z\| = \limsupline_n \|y_n-z\|$$ which yields immediately $\asra(\sucx)=\asra(\sucy)$ and $\asce(\sucx)=\asce(\sucy)$.

The second statement is a direct consequence of the def\mbox{}inition of $d$.
\end{proof}

Is there a sort of converse to the previous proposition?

\begin{conje}
Let $\sucx,\sucy$ be bounded sequences in a Banach space $X$. If $\asce(\sucx)=\asce(\sucy)$ and $\asra(\sucx)=\asra(\sucy)$ hold for each equivalent renorming of $X$, then $d(\sucx,\sucy) = 0$.
\end{conje}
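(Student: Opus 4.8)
The plan is to attack the contrapositive: assuming $d(\sucx,\sucy)>0$, produce an equivalent norm under which either the asymptotic radii or the asymptotic centers disagree. First I would extract a witnessing subsequence. Since the tails $C_n$ are nested, each of the two inclusions in the definition of $d$ holds on a \emph{final} segment of the indices $m$; hence if $d(\sucx,\sucy)>\vareps$, then at the witnessing $n_0$ at least one inclusion fails for \emph{every} $m$, and because the two failure-sets are initial segments covering $\IN$, one of them must be all of $\IN$. After relabelling $\sucx,\sucy$ this yields an index $n_0$ and a subsequence $(x_{j_k})_k$, $j_k\to\infty$, with $\|x_{j_k}-y_i\|>\vareps$ for all $k$ and all $i\geq n_0$: a piece of $\sucx$ that stays uniformly $\vareps$-away from a whole tail of $\sucy$. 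The intended next step would be to turn this set-theoretic gap into a discrepancy of the scalar invariant $\asra$ (or the set invariant $\asce$) by a renorming built from a separating functional.

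It is precisely here that the plan must be tested against the actual behaviour of $\asce$ and $\asra$, and I expect the main obstacle to be fatal. Write $\mathrm{Cl}(\sucx)=\bigcap_n \adh{C_n(\sucx)}$ for the cluster set. In the finite-dimensional model — which is a Banach space to which the conjecture applies — compactness gives $\limsupline_n\|x_n-z\|=\max\{\|p-z\|:p\in\mathrm{Cl}(\sucx)\}$, and since $z\mapsto\|p-z\|$ is convex this maximum equals the maximum over $\co(\mathrm{Cl}(\sucx))$. Consequently, in \emph{every} equivalent norm,
$$\asra(\sucx)=r\big(\adh{\co(\mathrm{Cl}(\sucx))}\big),\qquad \asce(\sucx)=c\big(\adh{\co(\mathrm{Cl}(\sucx))}\big).$$
Thus both invariants depend only on the closed convex hull of the cluster set, so the hypothesis ``$\asce,\asra$ agree for every renorming'' carries no more information than $\adh{\co(\mathrm{Cl}(\sucx))}=\adh{\co(\mathrm{Cl}(\sucy))}$, which is strictly weaker than $d(\sucx,\sucy)=0$.

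This obstruction is not merely technical: it already gives a counterexample in $\IR^2$. Let $T$ be a nondegenerate triangle with vertices $A,B,C$, let $\sucx=(A,B,C,A,B,C,\dots)$, and let $\sucy$ enumerate a dense subset of $T$. Then $\mathrm{Cl}(\sucx)=\{A,B,C\}$ and $\mathrm{Cl}(\sucy)=T$ have the same closed convex hull $T$, so $\asce(\sucx)=\asce(\sucy)$ and $\asra(\sucx)=\asra(\sucy)$ in every equivalent norm; yet $C_m(\sucy)$ is dense in $T$ while $C_{n_0}(\sucx)+\vareps B_{\IR^2}$ is a union of three small balls, so $C_m(\sucy)\subseteq C_{n_0}(\sucx)+\vareps B_{\IR^2}$ fails for small $\vareps$ and hence $d(\sucx,\sucy)>0$. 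The conjecture as stated is therefore false.

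The natural salvage is to replace $d$ by a distance comparing the \emph{convexified} tails $\co(C_n(\sucx))$, or equivalently to weaken the conclusion to $\adh{\co(\mathrm{Cl}(\sucx))}=\adh{\co(\mathrm{Cl}(\sucy))}$. Proving this corrected statement amounts to recovering a compact convex set from its Chebyshev balls in all norms, which the ``thin norm'' device does deliver: using $|x|_\eta=\max(\eta\|x\|,|\psi(x)|)$ for $\psi\in X^*$ and letting $\eta\to 0$, the radius reads off the width $\sup_K\psi-\inf_K\psi$ and the center reads off the midpoint, so the full support function, hence $\adh{\co}$, is determined. Carrying this reading over to $\limsupline_n\psi(x_n)$ would be the way to avoid the non-compactness issue in the infinite-dimensional case.
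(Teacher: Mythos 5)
You were asked to prove a statement that the paper itself leaves as an open conjecture, so there is no proof of record to compare against; what you have produced instead is a refutation, and as far as I can check it is correct. The key observation is sound: for a bounded sequence in a finite-dimensional space, compactness gives $\limsupline_n\|x_n-z\|=\max\{\|p-z\|:p\in \mathrm{Cl}(\sucx)\}$, where $\mathrm{Cl}(\sucx)=\bigcap_n\adh{C_n(\sucx)}$ is the (nonempty, compact) cluster set, and since $p\mapsto\|p-z\|$ is convex and continuous (the convexity you need is in $p$, not in $z$ as written), this maximum is unchanged on passing to $\adh{\co(\mathrm{Cl}(\sucx))}$. Hence the whole function $z\mapsto\limsupline_n\|x_n-z\|$, and with it both $\asra(\sucx)$ and $\asce(\sucx)$, depends only on $\adh{\co(\mathrm{Cl}(\sucx))}$ --- and this holds for every norm on the space. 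Your triangle example then does the job: the periodic sequence $(A,B,C,A,B,C,\dots)$ and a dense enumeration of $T$ have cluster sets $\{A,B,C\}$ and $T$ with the same closed convex hull, so they share asymptotic center and radius under every renorming of $\IR^2$, while $d>0$ because no tail of the dense enumeration is contained in $\{A,B,C\}+\vareps B_{\IR^2}$ once $\vareps$ is smaller than the distance from the centroid to the vertex set. This is consistent with, and sharpens, the example the authors give immediately after the conjecture: their sequences ${((-1)^n,0)}_n$ and ${(0,(-1)^n)}_n$ have \emph{different} closed convex hulls of cluster sets (the two coordinate segments), which is precisely why a renorming (the sup norm) separates their centers; your example is engineered so that no renorming can.

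Two caveats. First, your opening paragraph (extracting a subsequence of $\sucx$ uniformly $\vareps$-separated from a tail of $\sucy$) is correct but plays no role in the counterexample and could be dropped. Second, the closing ``salvage'' is only a sketch: the identification of $\asra$ and $\asce$ with the Chebyshev data of $\adh{\co(\mathrm{Cl}(\sucx))}$ relies on compactness and fails in infinite dimensions, where the cluster set of a bounded sequence may even be empty, so the corrected statement you propose would need a different formulation there. But the main point stands: the conjecture as stated is false, and the correct outcome of this exercise is to report the counterexample rather than a proof.
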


The condition ``for each equivalent renorming'' cannot be removed from the conjecture. Indeed, in the euclidean $\IR^2$ consider the sequences ${((-1)^n,0)}_n$ and ${(0,(-1)^n)}_n$. Their distance is $\sqrt{2}$ but they both have asymptotic center $\{0\}$ and asymptotic radius $1$. Note that their asymptotic centers are no longer the same if we choose, e. g., the sup norm.

J. Mach (\cite{Ma}, p. $225$) introduced a property called $P_2$ to prove the existence of continuous selectors for the center map. The following notion of continuity serves the analogous purpose for sequential asymptotic centers:

\begin{defi}
Let $X$ be a Banach space. We will say that $X$ has {\bf continuity with respect to asymptotic centers} (in short, {\bf $\mach$}) if there exists $\delta > 0$ such that every bounded sequence $\sucx \subseteq X$ satisf\mbox{}ies $$\ascd{\delta}(\sucx)\subseteq B_X+\asce(\sucx).$$ If we want to be more specif\mbox{}ic we will say that the space has {\bf $\dmach$}.
\end{defi}

The following theorem can be applied to all pseudometrics sharing a certain feature of $d$.

\begin{theor}\label{selectors}
Let $X$ be a Banach space and $\rho: \ell_\infty(X) \to \IR$ be a pseudometric such that 
\begin{itemize}
\item $\rho(\sucx,\sucy)=0$ implies $\asra(\sucx)=\asra(\sucy)$ and $\asce(\sucx)=\asce(\sucy)$.
\end{itemize}
If $X$ has $\dmach$ then the multivalued mapping $T: \ell_\infty(X) / \rho \longto 2^X$ given by $T([\sucx])=\asce(\sucx)$ satisf\mbox{}ies:
\begin{itemize}
\item Every $T([\sucx])$ is convex, closed and nonempty.
\item $T$ is lower semicontinuous.
\end{itemize}
In other words, $T$ is in the situation of Michael's selection theorem and thus it has a continuous selector.
\end{theor}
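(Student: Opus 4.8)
The plan is to verify the three structural hypotheses of Michael's selection theorem for $T$ and then invoke it. Well-definedness is immediate: if $[\sucx]=[\sucy]$ then $\rho(\sucx,\sucy)=0$, so by assumption $\asce(\sucx)=\asce(\sucy)$, and $T([\sucx])=T([\sucy])$. For the values, fix a bounded sequence $\sucx$ and set $g(z)=\limsupline_n\|x_n-z\|$. Writing $g=\inf_m g_m$ with $g_m(z)=\sup_{n\geq m}\|x_n-z\|$, each $g_m$ is convex (a supremum of the convex maps $z\mapsto\|x_n-z\|$) and $(g_m)_m$ is nonincreasing; since the pointwise infimum of a nonincreasing sequence of convex functions is convex, $g$ is convex. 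Moreover $g$ is $1$-Lipschitz, hence continuous. Therefore $\asce(\sucx)=\{z:g(z)\leq\asra(\sucx)\}$ is a closed convex set. Nonemptiness is where $\dmach$ first enters: for each $\delta>0$ the definition of $\asra(\sucx)$ as an infimum guarantees $\ascd{\delta}(\sucx)\neq\emptyset$, and the inclusion $\ascd{\delta}(\sucx)\subseteq B_X+\asce(\sucx)$ then forces $\asce(\sucx)\neq\emptyset$. Thus every $T([\sucx])$ is nonempty, closed and convex.

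Before addressing continuity I would first sharpen $\dmach$ by a scaling argument. Applying the inclusion $\ascd{\delta}(\cdot)\subseteq B_X+\asce(\cdot)$ to the rescaled sequence $\lambda\sucx$ and using $\asra(\lambda\sucx)=\lambda\,\asra(\sucx)$, $\asce(\lambda\sucx)=\lambda\,\asce(\sucx)$ and $\ascd{\delta}(\lambda\sucx)=\lambda\,\ascd{\delta/\lambda}(\sucx)$, one obtains after dividing by $\lambda$ the self-improved estimate
$$\ascd{\delta'}(\sucx)\subseteq\frac{\delta'}{\delta}\,B_X+\asce(\sucx)\qquad(\delta'>0).$$
In words: any point that is an asymptotic center up to an error $\delta'$ lies within distance $\delta'/\delta$ of the genuine asymptotic center.

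The main work is lower semicontinuity of $T$. Since the domain is metric it suffices to show that whenever $[\sucy_k]\to[\sucx]$ and $z\in\asce(\sucx)$ there are $z_k\in\asce(\sucy_k)$ with $z_k\to z$. Fix such a $z$, so $\limsupline_n\|x_n-z\|=\asra(\sucx)$. The strategy is to show that $z$ is an approximate center of $\sucy_k$, that is $\limsupline_n\|y_k(n)-z\|\leq\asra(\sucy_k)+\delta_k$ with $\delta_k\to 0$; then the sharpened estimate yields $z_k\in\asce(\sucy_k)$ with $\|z-z_k\|\leq\delta_k/\delta\to 0$, as required. Reducing $\delta_k$ to $0$ amounts to the two convergences $\limsupline_n\|y_k(n)-z\|\to\asra(\sucx)$ and $\asra(\sucy_k)\to\asra(\sucx)$. \emph{This is the crux and the main obstacle}: these convergences do not follow from the bare statement that $\rho=0$ preserves centers and radii, but must be extracted from the quantitative compatibility between $\rho$ and asymptotic distances that the pseudometric $d$ enjoys, namely that $\rho(\sucx,\sucy)$ controls $\sup_z\bigl|\limsupline_n\|x_n-z\|-\limsupline_n\|y_n-z\|\bigr|$; this is the genuine content of ``$\rho$ shares the feature of $d$'' and is exactly the Lipschitz-type estimate established earlier for $d$ itself. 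Once that control is in hand both convergences are immediate and the argument closes. Finally, with $T$ lower semicontinuous and taking nonempty closed convex values in the Banach space $X$ over the paracompact metric domain $\ell_\infty(X)/\rho$, Michael's selection theorem applies and produces the desired continuous selector.
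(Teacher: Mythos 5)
Your proof is correct and follows essentially the same route as the paper's: the heart of both arguments is the scaling self-improvement of $\dmach$, which you isolate in advance as $\ascd{\delta'}(\sucx)\subseteq\frac{\delta'}{\delta}B_X+\asce(\sucx)$ and which the paper performs inline by rescaling the nearby sequence to $\sucz=\vareps^{-1}\sucy$ and applying the $\dmach$ inclusion to $\sucz$. The organizational differences are minor: the paper reduces lower semicontinuity to openness of $W=\{\sucx:\asce(\sucx)\cap U_X\neq\varnothing\}$ and silently skips the verification that each $\asce(\sucx)$ is closed, convex and (via $\dmach$) nonempty, which you carry out explicitly and correctly --- in particular your remark that a \emph{nonincreasing} limit of convex functions is convex is exactly the right fix for the fact that a general infimum of convex functions need not be, and your derivation of nonemptiness from $\ascd{\delta}(\sucx)\neq\varnothing$ plus the $\dmach$ inclusion is sound. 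The ``crux'' you flag is a genuine and well-taken point rather than a gap in your own argument: the paper's proof tests $\sucy$ against the condition $\|\sucx-\sucy\|<\delta\vareps/2$, i.e.\ it quietly works with the sup-norm distance on $\ell_\infty(X)$ instead of $\rho$, and the purely qualitative hypothesis that $\rho(\sucx,\sucy)=0$ forces equal centers and radii is indeed insufficient to convert this into $\rho$-openness for an arbitrary pseudometric. What is needed, and what holds for $\rho=d$ (it is precisely the estimate used in the proposition showing that $d(\sucx,\sucy)=0$ implies equality of asymptotic centers and radii), is the quantitative bound $|\limsupline_n\|x_n-z\|-\limsupline_n\|y_n-z\||\leq\rho(\sucx,\sucy)$ uniformly in $z$; with that in hand your two convergences follow and the argument closes exactly as you describe. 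So your diagnosis identifies an imprecision shared by the theorem's hypotheses and the paper's own proof, not a defect of your approach.
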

		
\begin{proof}

Note that we only need to prove that 
  $$
  W:=\{\sucx \in \ell_\infty(X) : \asce(\sucx) \cap U_X \neq \varnothing\}
  $$
is open, where $U_X$ is the open unit ball of $X$. Assume that $\sucx \in W$ and take $u \in \asce(\sucx) \cap U_X$ and $\vareps > 0$ such that $B(u, \vareps) \subseteq U_X$.

If $\sucy$ satisf\mbox{}ies $\|\sucx-\sucy\| < \delta\vareps/2$ then it is easy to see that $|\asra(\sucx)-\asra(\sucy)|<\delta\vareps/2$ and thus 
  $$
  \limsupline_n \|y_n-u\| < \frac{\delta\vareps}{2} + \asra(\sucx) < \delta\vareps + \asra(\sucy).
  $$
If we take $z_n=\vareps^{-1}y_n$ then $\asra(\sucz)=\vareps^{-1}\asra(\sucy)$ and the previous inequality implies that 
  $$
  \vareps^{-1}u \in \ascd{\delta}(\sucz).
  $$
Consequently $\vareps^{-1}u \in B_X + \asce(\sucz)$, which in turn leads to $u \in \vareps B_X + \asce(\sucy)$.

We deduce that there exists $v \in \asce(\sucy)$ with $\|u-v\|\leq \vareps$ and so $v \in U_X$. We conclude that $\sucy \in W$ and then $W$ is a open set.

\end{proof}

Next corollary follows from previous theorem when $\rho=d$.

\begin{coroll}\label{corollphi}
If a Banach space $X$ has $\dmach$ then there exists $\varphi: \ell_\infty(X) \to X$ continuous such that $\varphi(\sucx) \in \asce(\sucx)$, and $\varphi(\sucx)=\varphi(\sucy)$ whenever $d(\sucx,\sucy)=0$. In particular $\varphi$ satisf\mbox{}ies:
\begin{itemize}
\item $\varphi(\sucx)= \lim_n x_n$ if $\sucx$ converges.
\item $\varphi(\sucx) = \varphi(F(\sucx))$ where $F$ is the forward operator.
\item $\varphi(\sucx) = \varphi({(x_{\pi(n)})}_n)$ for every bijection $\pi: \IN \to \IN$.
\end{itemize}
\end{coroll}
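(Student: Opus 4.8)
The plan is to specialize Theorem~\ref{selectors} to the pseudometric $\rho=d$ and then transport the resulting selector back to $\ell_\infty(X)$. First I would verify that $d$ satisfies the sole hypothesis of Theorem~\ref{selectors}, namely that $d(\sucx,\sucy)=0$ forces $\asra(\sucx)=\asra(\sucy)$ and $\asce(\sucx)=\asce(\sucy)$; but this is precisely statement~(1) of the proposition preceding the conjecture. Hence Theorem~\ref{selectors} applies and yields, through Michael's selection theorem, a continuous map $\psi:\ell_\infty(X)/d\longto X$ with $\psi([\sucx])\in\asce(\sucx)$ for every bounded sequence $\sucx$.

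Next I would define $\varphi=\psi\circ q$, where $q:\ell_\infty(X)\longto\ell_\infty(X)/d$ is the canonical projection. Taking $m=n$ in the definition of $d$ shows that the required tail inclusions hold with $\vareps=\|\sucx-\sucy\|_\infty$, so $d(\sucx,\sucy)\leq\|\sucx-\sucy\|_\infty$; thus $q$ is $1$-Lipschitz, hence continuous, and therefore $\varphi$ is continuous and satisfies $\varphi(\sucx)=\psi([\sucx])\in\asce(\sucx)$. Moreover $d(\sucx,\sucy)=0$ gives $[\sucx]=[\sucy]$ and so $\varphi(\sucx)=\varphi(\sucy)$, which is the invariance the corollary records. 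The three bulleted properties then reduce to distance computations.

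For the bulleted items I would argue as follows. If $\sucx$ converges to $x$, then $\limsupline_n\|x_n-y\|=\|x-y\|$ for every $y$, so $\asra(\sucx)=0$ and $\asce(\sucx)=\{x\}$; since $\varphi(\sucx)\in\asce(\sucx)$, necessarily $\varphi(\sucx)=x=\lim_n x_n$ (this is the one case handled directly, without invoking a vanishing distance). For the forward operator the tails $C_n(F(\sucx))$ coincide with those of $\sucx$ up to a finite shift of the index, so $d(\sucx,F(\sucx))=0$ and $\varphi(\sucx)=\varphi(F(\sucx))$. For a bijection $\pi$, the set $\pi^{-1}(\{1,\dots,n-1\})$ is finite, so for $m$ exceeding all its elements one has $\{x_{\pi(k)}:k\geq m\}\subseteq\{x_j:j\geq n\}$, i.e. $C_m((x_{\pi(k)})_k)\subseteq C_n(\sucx)$; the symmetric inclusion follows by the same argument applied to $\pi^{-1}$, whence $d(\sucx,(x_{\pi(n)})_n)=0$ and $\varphi(\sucx)=\varphi((x_{\pi(n)})_n)$.

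Since the substantive work is already contained in Theorem~\ref{selectors}, I do not anticipate a genuine obstacle. The only points requiring care are the Lipschitz estimate $d\leq\|\cdot\|_\infty$ that delivers continuity of $\varphi$, and, in the permutation case, the remark that a bijection sends only finitely many indices below any fixed bound; the convergent case is the lone instance that rests on the selector property itself rather than on a vanishing-distance identity.
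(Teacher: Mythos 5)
Your proposal is correct and follows exactly the route the paper intends: the paper's entire proof is the remark that the corollary follows from Theorem~\ref{selectors} with $\rho=d$, and you supply precisely the missing details (the hypothesis on $d$ via part~(1) of the proposition, the Lipschitz bound $d\leq\|\cdot\|_\infty$ giving continuity of the quotient map, and the vanishing-distance verifications for the shift and permutation bullets together with the direct argument for convergent sequences). No gaps.
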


It is not dif\mbox{}f\mbox{}icult to see that $\varphi$ cannot be additive even in the simplest space $X = \IR$. However, it would be interesting to study whether $\varphi(\sucx+\sucy) = \varphi(\sucx)+\varphi(\sucy)$ given that $\sucx$ is arbitrary and $\sucy$ is convergent. Clearly, this holds if asymptotic centers are always unitary in the space.

Which spaces have $\mach$? At least, certain well-placed subspaces of the $\cc(K)$ spaces:

\begin{theor}
Let $K$ be a Hausdorf\mbox{}f, compact space and $Y \subseteq \cc(K)$ a closed subspace. If $Y$ has the properties:
\begin{enumerate}
\item $Y$ is sequentially asymptotically center-complete.
\item There exists $\delta > 0$ such that for every $f \in Y$ there exists $u:\IR \to [-1,1]$ satisfying
\begin{itemize}
\item $u(0)=0$.
\item $\min\{1,\frac{\delta}{|x|}\} \leq \frac{u(x)}{x} \leq 1$ \quad if $ x \neq 0$.
\item $u \circ f \in Y$
\end{itemize}
\end{enumerate}
then $Y$ has $\dmach$.
\end{theor}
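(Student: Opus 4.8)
The plan is to reduce to the case where the given almost-center is the origin, use hypothesis (1) to produce a genuine asymptotic center, and then repair it into a center of norm at most $1$ by composing with the clipping function from hypothesis (2). Fix a bounded sequence $\bar f = (f_n)_n$ in $Y$ and some $g \in \ascd{\delta}(\bar f)$; the goal is to find $h \in \asce(\bar f)$ with $\|g-h\| \le 1$. I would set $\phi_n = f_n - g$; since $Y$ is a subspace, $\bar\phi = (\phi_n)_n \subseteq Y$, and as asymptotic radius and center are translation invariant we have $\asra(\bar\phi) = \asra(\bar f) =: R$ and $\asce(\bar\phi) = \asce(\bar f) - g$, while $g \in \ascd{\delta}(\bar f)$ becomes $\limsupline_n \|\phi_n\| \le R + \delta$, i.e. $0 \in \ascd{\delta}(\bar\phi)$. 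By (1), $\asce(\bar\phi) \neq \varnothing$; pick $c$ in it, so $\limsupline_n \|\phi_n - c\| = R$. Applying (2) to $c \in Y$ gives $u: \IR \to [-1,1]$ with $u \circ c \in Y$; set $h := u\circ c \in Y$. Unwinding $\min\{1,\delta/|x|\} \le u(x)/x \le 1$ shows $u(x)=x$ for $|x|\le\delta$, and for $|x|>\delta$ that $u(x)$ shares the sign of $x$ with $\delta \le |u(x)| \le |x|$; since $u$ maps into $[-1,1]$ we get $\|h\|\le 1$ for free.

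It then remains to verify $h \in \asce(\bar\phi)$, i.e. $\limsupline_n \|\phi_n - h\| \le R$ (the reverse being automatic as $R=\asra(\bar\phi)$). This is a pointwise estimate, where I would play the two hypotheses against each other. Fix $\vareps>0$; for large $n$ and all $t\in K$ we have $|\phi_n(t)-c(t)| \le R+\vareps$ and $|\phi_n(t)|\le R+\delta+\vareps$, using that the norm on $Y\subseteq\cc(K)$ is the supremum norm. Fix $t$. If $|c(t)|\le\delta$ then $h(t)=c(t)$ and $|\phi_n(t)-h(t)| = |\phi_n(t)-c(t)| \le R+\vareps$. If $c(t)>\delta$ (the case $c(t)<-\delta$ is symmetric), then $\delta \le h(t) \le c(t)$, and I bound each deviation separately: on the upper side $\phi_n(t)-h(t) \le (R+\delta+\vareps)-\delta = R+\vareps$, using $h(t)\ge\delta$ together with the almost-center bound; on the lower side $h(t)-\phi_n(t) \le c(t)-(c(t)-R-\vareps) = R+\vareps$, using $h(t)\le c(t)$ together with the center bound. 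Hence $\|\phi_n-h\| \le R+\vareps$ for large $n$, giving $\limsupline_n\|\phi_n-h\| \le R$ and $h\in\asce(\bar\phi)$. Translating back, $h+g \in \asce(\bar f)$ with $\|(h+g)-g\| = \|h\| \le 1$, which is exactly the inclusion $\ascd{\delta}(\bar f) \subseteq B_Y + \asce(\bar f)$; thus $Y$ has $\dmach$.

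The step I expect to be the main obstacle is this two-sided estimate, and in particular seeing why the two properties of $u$ cancel exactly the two different slacks. The design of $u$ is what makes it work: on the side where the almost-center bound carries an extra $\delta$, the lower bound $|u|\ge\delta$ pulls $h(t)$ at least $\delta$ away from the origin and absorbs it; on the opposite side the upper bound $|u|\le|\cdot|$ keeps $h(t)$ between the origin and $c(t)$, so the genuine radius $R$ coming from the center $c$ is inherited. Recognising that the translation placing the almost-center at $0$ is what makes the origin the common anchor for both bounds, and matching which property of $u$ kills which slack, is the crux; everything else is bookkeeping.
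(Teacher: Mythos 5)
Your proof is correct, and the core construction coincides with the paper's: both repair the given almost-center by adding $u$ composed with its difference from a genuine asymptotic center (your $g+u\circ c$ with $c=c'-g$ is exactly the paper's $z=h+u\circ(g-h)$ after matching notation), and the case analysis on the sign and size of that difference is the same. Where you genuinely diverge is in the verification that the repaired point is a center. The paper first invokes Theorem \ref{ab} to represent $\limsupline_n\|f_n-g\|$ as $\max\{\|b-g\|,\|g-a\|\}$ for the semicontinuous envelopes $a,b$, and then runs the pointwise estimate against $a(x)$ and $b(x)$; you instead run the same two-sided pointwise estimate directly against $\vareps$-approximate versions of the defining inequalities $\limsupline_n\|\phi_n-c\|=R$ and $\limsupline_n\|\phi_n\|\leq R+\delta$, using only that the norm of $Y$ is the supremum norm and that the threshold in $n$ is uniform in $t$. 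This buys you a self-contained and slightly more general argument: it never uses compactness of $K$ (which enters the paper's route only through Theorem \ref{ab}), so it applies verbatim to closed subspaces of any sup-normed function space, e.g.\ of $\cc_0(L)$ or $\ell_\infty(S)$. What the paper's route buys in exchange is that the envelope functions $a,b$ make the inequalities exact rather than $\vareps$-approximate, at the cost of the extra machinery. Your identification of which property of $u$ absorbs which slack (the lower bound $|u|\geq\delta$ against the $\delta$ in the almost-center estimate, the upper bound $|u(x)|\leq|x|$ against the genuine radius) is precisely the mechanism of the paper's proof as well.
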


\begin{proof}

Let ${(f_n)}_n \subseteq Y$ be a bounded sequence and call $r=\asra({(f_n)}_n)$. By virtue of theorem \ref{ab}, there exist $a, b: K \to \IR$, lower and upper semicontinuous respectively, such that
  $$
  \asce({(f_n)}_n) = \{g \in Y: \max \{ \|b - g\|, \|g - a\| \} = r\}
  $$
and 
  $$
  \ascd{\delta}({(f_n)}_n) = \{h \in Y: \max \{ \|b - h\|, \|h - a\| \} \leq r + \delta \}.
  $$

Fix $g \in \asce({(f_n)}_n)$. Given $h \in \ascd{\delta}({(f_n)}_n)$, for $g-h\in Y$ consider $u:\IR \to [-1,1]$ as in the hypothesis. Let us see that $z = h + u \circ(g-h)\in \asce({(f_n)}_n)$. We have $z \in Y$ and $\|z-h\| = \|u \circ(g-h)\| \leq 1$. For every $x \in X$:

\begin{itemize}
\item If $(g-h)(x) = 0$ then $z(x)=g(x)$.
\item If $(g-h)(x) > 0$ then $\min\{(g-h)(x),\delta\} \leq u((g-h)(x))=(z-h)(x) \leq (g-h)(x)$, which implies 
  \begin{equation*}\begin{split}
  -r &\leq \min\{g(x)-a(x),h(x)-a(x)+\delta\} \leq \min\{g(x)-h(x),\delta\}+h(x)-a(x) \leq \\
  &\leq z(x)-a(x)\leq g(x)-a(x) \leq r.
  \end{split}\end{equation*}
\item If $(g-h)(x) < 0$ then $(g-h)(x) \leq (z-h)(x) = u((g-h)(x)) \leq \max\{(g-h)(x),-\delta\}$, which implies 
  \begin{equation*}\begin{split}
  -r &\leq g(x)-a(x) \leq z(x)-a(x)\leq \max\{(g-h)(x),-\delta\}+h(x)-a(x) \leq \\
  &\leq \max\{g(x)-a(x),h(x)-a(x)-\delta\} \leq r.
  \end{split}
  \end{equation*}
\end{itemize}

We deduce that $|z(x)-a(x)| \leq r$. Proceeding in the same way with $|z(x)-b(x)|$, it is now clear that 
  $$
  \max\{\|z-a\|,\|z-b\|\} \leq r.
  $$
Therefore $z\in \asce({(f_n)}_n)$ and then $h\in B_X+\asce({(f_n)}_n)$.

\end{proof}

Clearly, the second condition in the previous theorem might be hard to check in some subspaces. Nonetheless, it is straightforward to see that this condition is satisf\mbox{}ied by every subspace of $\cc(K)$ that contains the constants and is closed under taking absolute value.

\subsection{Hilbert spaces}

Here we will prove more than just the continuity, showing that, in the case of Hilbert spaces, a sort of H\"older condition for the (uniquely def\mbox{}ined) selector can be obtained. We are based in \cite{BaPa}, where M. Baronti and P. L. Papini proved the following result concerning centers in a Hilbert space:
$$\|c(A)-c(B)\|^2 \leq d_H(A,B)(r(A)+r(B)+d_H(A,B))$$
where $A$ and $B$ are bounded sets and $d_H$ is the Hausdorf\mbox{}f metric.

Let us see that the analogous result holds for bounded sequences and asymptotic centers. What follows are suitable modif\mbox{}ications of proposition $2.3$ and corollary $2.5$ in \cite{AmMa} which seem to f\mbox{}it our purpose. This will be achieved in theorem \ref{seqbapa}, whose proof uses essentially the techniques of \cite{BaPa} with some necessary adjustments. Although it is not strictly necessary in the sequel, let us recall that the asymptotic center in Hilbert spaces is always a unitary set (\cite{Ed}).

\begin{lemma}\label{centerinco}
Let $X$ be a Banach space and $\sucx = {(x_n)}_n$ a bounded sequence in $X$, with asymptotic radius $r$ and having $z$ as an asymptotic center. For every $\vareps > 0$, consider the subsequence ${(v_n)}_n$ of $\sucx$ determined by the inf\mbox{}inite set $V = \{\nnat : \|x_n-z\| > r - \vareps\}$. Then $z$ is an asymptotic center and $r$ is the asymptotic radius of ${(v_n)}_n$.
\end{lemma}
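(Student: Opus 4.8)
The plan is to establish two things separately: that $\asra((v_n)_n) = r$, and that $z$ attains this radius, i.e. $\limsupline_n\|v_n - z\| = r$. First I would record the routine facts. Since $z\in\asce(\sucx)$ means $\limsupline_n\|x_n - z\| = \asra(\sucx) = r$, we have $\|x_n - z\| > r - \varepsilon$ for infinitely many $n$, so $V$ is indeed infinite and the subsequence is well defined. Because passing to a subsequence can only decrease the $\limsup$, for every $y\in X$ we get $\limsupline_n\|v_n - y\| \le \limsupline_n\|x_n - y\|$, whence $\asra((v_n)_n)\le\asra(\sucx)=r$. Moreover, extracting from $(x_n)$ a subsequence along which $\|x_n - z\|\to r$, its terms eventually satisfy $\|x_n - z\| > r-\varepsilon$ and so lie in $V$; they therefore form, up to finitely many terms, a subsequence of $(v_n)_n$, which gives $\limsupline_n\|v_n - z\| \ge r$. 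Combined with the subsequence bound $\le r$, this yields $\limsupline_n\|v_n - z\| = r$.

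The crux is the reverse inequality $\asra((v_n)_n)\ge r$, which I would prove by contradiction. Suppose some $w\in X$ satisfies $\limsupline_n\|v_n - w\| = s < r$. I would test the whole sequence against the segment point $p_\lambda = (1-\lambda)z + \lambda w$ for small $\lambda\in(0,1)$. On the indices carried by $(v_n)_n$, convexity of the norm gives $\limsupline_n\|v_n - p_\lambda\| \le (1-\lambda)\limsupline_n\|v_n - z\| + \lambda\limsupline_n\|v_n - w\| \le (1-\lambda)r + \lambda s < r$. On the complementary set $W = \IN\setminus V$ every $x_m$ satisfies $\|x_m - z\|\le r - \varepsilon$, so $\|x_m - p_\lambda\| \le \|x_m - z\| + \lambda\|w - z\| \le r - \varepsilon + \lambda\|w - z\|$, which is $< r$ as soon as $\lambda\|w - z\| < \varepsilon$. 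Since the $\limsup$ over all of $\IN$ is the maximum of the $\limsup$s over the partition $V, W$, for such small $\lambda$ I obtain $\limsupline_m\|x_m - p_\lambda\| < r$, contradicting $\asra(\sucx) = r$. Hence $\asra((v_n)_n) = r$, and then $\limsupline_n\|v_n - z\| = r = \asra((v_n)_n)$ shows that $z$ is an asymptotic center of $(v_n)_n$, as claimed.

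The hard part is controlling the complementary indices $W$: a point $w$ that is good for the subsequence might send the $W$-terms far above $r$, so one cannot simply use $w$ itself. The device that resolves this is to move only slightly from $z$ toward $w$, taking $p_\lambda$ near $z$ on the segment $[z,w]$: the $\varepsilon$-gap that the $W$-terms enjoy with respect to $z$ absorbs the small perturbation $\lambda\|w - z\|$, while the strict gap $s < r$ for the $V$-terms survives the convex combination. The only remaining routine point I would verify is the elementary fact that the $\limsup$ over $\IN$ equals the maximum over the two pieces $V$ and $W$ (trivial when $W$ is finite, and a standard property of $\limsup$ otherwise).
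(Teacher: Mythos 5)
Your proposal is correct and follows essentially the same route as the paper: the key step in both is to perturb $z$ slightly toward the hypothetical better point for the subsequence (your $p_\lambda=(1-\lambda)z+\lambda w$ is exactly the paper's $z_0=z+\mu(y-z)$), using convexity of the norm on the indices in $V$ and the $\varepsilon$-gap to absorb the perturbation on $\IN\setminus V$, reaching a contradiction with $\asra(\sucx)=r$. Your explicit verification that $\limsupline_{n}\|v_n-z\|=r$ is a detail the paper leaves implicit, but it is the same argument in substance.
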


\begin{proof}

Assume the asymptotic radius of ${(v_n)}$ is smaller than $r$. Then there exists $y$ satisfying $\disp \limsupline_{n \in V} \|x_n - y\| < r$. Take $\mu \in (0,1)$ with $\mu\|y-z\| < \vareps$ and def\mbox{}ine $z_0 = z + \mu (y-z)$. We have

  $$\limsupline_{n \in \IN \setminus V} \|x_n - z_0\| \leq \|z_0 - z\| + \limsupline_{n \in \IN \setminus V} \|x_n-z\| \leq \mu \|y-z\| + r - \vareps < r
  $$
and
  $$
  \limsupline_{n \in V} \|x_n - z_0\| \leq (1-\mu)\limsupline_{n \in V} \|x_n-z\| + \mu \limsupline_{n \in V} \|x_n - y\| < (1-\mu)\cdot r+\mu\cdot r= r,
  $$
which proves that the asymptotic center of ${(x_n)}_n$ is smaller than $r$, a contradiction. Therefore, $r$ is the asymptotic radius of ${(v_n)}_n$, which also has $z$ as an asymptotic center.
\end{proof}

\begin{lemma}\label{ammabapa}
Let $X$ be a Hilbert space and $\sucx ={(x_n)}_n$ a bounded sequence in $X$, with asymptotic center $z$ and asymptotic radius $r$. Then
  $$
  z \in \bigcap_{\myatop{\knat}{\vareps > 0}} \adh{\co}\left(C_k(\sucx) \setminus B\left(z, r - \vareps \right) \right).
  $$

In particular, for every $\vareps > 0$ and $x^* \in X^*$ there exists a subsequence ${(u_n)}_n$ of $\sucx$ satisfying, for every $\nnat$,
\begin{itemize}
\item $\|u_n-z\| \geq r - \vareps$,
\item $x^*(u_n)  \geq x^*(z) - \vareps$.
\end{itemize}

\end{lemma}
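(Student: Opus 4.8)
The plan is to prove the two assertions in turn: first the displayed intersection formula, and then to deduce the existence of the subsequence from it by a separating-functional argument. Throughout I would exploit the Hilbert structure through the orthogonal projection onto closed convex sets, that is, through the variational inequality characterizing nearest points.

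For the first assertion, fix $\knat$ and $\vareps > 0$ and write $S = C_k(\sucx) \setminus B(z, r - \vareps)$; the goal is $z \in \adh{\co}(S)$. Suppose not. Since $X$ is a Hilbert space, let $w$ be the nearest point to $z$ in the closed convex set $\adh{\co}(S)$; then $w \neq z$, and the variational inequality gives $\langle z - w, x - w \rangle \leq 0$ for every $x \in \adh{\co}(S)$, whence $\langle x_n - z, w - z \rangle \geq \|w - z\|^2 =: c > 0$ for each $x_n \in S$. Now move $z$ toward $w$: put $z_t = z + t(w - z)$ and expand, for $x_n \in S$, $\|x_n - z_t\|^2 = \|x_n - z\|^2 - 2t\langle x_n - z, w - z\rangle + t^2\|w - z\|^2 \leq \|x_n - z\|^2 - tc(2 - t)$. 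By Lemma \ref{centerinco}, the subsequence indexed by $V = \{\nnat : \|x_n - z\| > r - \vareps\}$ still has asymptotic center $z$ and asymptotic radius $r$; all its terms of index at least $k$ lie in $S$, so passing to the $\limsup$ over $n \in V$ (which discards the finitely many earlier indices) yields $\limsupline_{n \in V}\|x_n - z_t\|^2 \leq r^2 - tc(2 - t) < r^2$ for every small $t > 0$. Thus $z_t$ yields a strictly smaller $\limsup$ on this subsequence, contradicting that $r$ is its asymptotic radius. Hence $z \in \adh{\co}(S)$ for all $k$ and $\vareps$.

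For the second assertion, identify $x^*$ with an inner product $\langle \cdot, \xi \rangle$ by Riesz representation, and fix $\vareps > 0$. The indices with $\|x_n - z\| \geq r - \vareps$ are infinitely many, since otherwise $\limsupline_n \|x_n - z\| \leq r - \vareps < r$, contradicting that $z$ is an asymptotic center. Suppose the two requested inequalities cannot hold simultaneously for infinitely many indices; then there is $k_0$ such that every $x_n \in S_{k_0} := C_{k_0}(\sucx) \setminus B(z, r - \vareps)$ satisfies $x^*(x_n) < x^*(z) - \vareps$. The closed half-space $\{y : x^*(y) \leq x^*(z) - \vareps\}$ is convex and contains $S_{k_0}$, hence it contains $\adh{\co}(S_{k_0})$; but the first assertion gives $z \in \adh{\co}(S_{k_0})$, forcing $x^*(z) \leq x^*(z) - \vareps$, which is absurd. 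Therefore infinitely many indices satisfy both $\|x_n - z\| \geq r - \vareps$ and $x^*(x_n) \geq x^*(z) - \vareps$, and enumerating them produces the required subsequence ${(u_n)}_n$.

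The main obstacle is the first assertion, and specifically the single step where the Hilbert geometry is indispensable: one must produce a direction $w - z$ along which $z$ can be perturbed so as to strictly decrease the asymptotic spread of the relevant subsequence. The nearest-point projection supplies exactly such a direction, together with the \emph{uniform} lower bound $\langle x_n - z, w - z\rangle \geq \|w - z\|^2$, and it is this uniformity that survives the passage to the $\limsup$ and delivers the quadratic gain $tc(2-t)$. In a general Banach space a separating functional still exists, but it need not yield a uniform quadratic improvement, which is why the statement is confined to Hilbert spaces; once the projection inequality is available, the remainder is soft.
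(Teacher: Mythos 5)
Your proof is correct, and it reorganizes the argument in a way that is genuinely simpler than the paper's. The paper first proves the weaker fact $z \in \bigcap_{\knat} \adh{\co}(C_k(\sucx))$ by projecting $z$ onto that intersection $A$; since the points $x_n$ themselves need not lie in $A$, the variational inequality cannot be applied to them directly, and the paper must insert a reflexivity/weak-compactness argument to show that $f(x_n)=(z-y|x_n)$ eventually drops below $\alpha+\mu$, before running the quadratic expansion and finally invoking Lemma \ref{centerinco} to pass to the truncated tails. You instead project $z$ directly onto $\adh{\co}\left(C_k(\sucx)\setminus B(z,r-\vareps)\right)$, a set that already contains all the relevant terms $x_n$ (those with $n\in V$ and $n\ge k$), so the variational inequality yields the uniform bound $\langle x_n-z,\,w-z\rangle\ge\|w-z\|^2$ at once, and the perturbation $z_t=z+t(w-z)$ strictly decreases the $\limsupline$ over $V$, contradicting Lemma \ref{centerinco} directly; the reflexivity step disappears entirely, while the Hilbert structure is still used exactly where it is indispensable. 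Your derivation of the ``in particular'' clause (a closed half-space would contain $\adh{\co}(S_{k_0})$ but not $z$) is equivalent to the paper's observation that $x^*(z)\le\sup\{x^*(y):y\in C_k(\sucx)\setminus B(z,r-\vareps)\}$ followed by an inductive choice of the $u_n$. The only cosmetic omission is that in the first part you do not note that $C_k(\sucx)\setminus B(z,r-\vareps)$ is nonempty (equivalently, that $V$ is infinite) before projecting onto its closed convex hull; you record precisely this fact at the start of the second part, and it follows exactly as you say there.
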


\begin{proof}

We will write $C_k$ instead of $C_k(\sucx)$. Call $A = \bigcap_{\knat} \adh{\co}(C_k)$, and let us see f\mbox{}irst the weaker statement $z \in A$. Assume on the contrary that $z \notin A$, and let $y$ be the projection of $z$ in the convex, closed set $A$. It is well known that for every $a \in A$ one has $(z-y|a-y) \leq 0$, where $(\ |\ )$ denotes the inner product of $X$. If we consider $\alpha = (z-y|y)$ and $f: X \to \IR$ given by $f(b) = (z-y|b)$, we have that $f \in X^*$ and $A \subseteq f^{-1}((-\infty,\alpha])$.

Let us see that for every $\mu > 0$, the set $\{n \in \IN: x_n \in f^{-1}((-\infty,\alpha+\mu])\}$ is cof\mbox{}inite. Assume the opposite, then there exists a subsequence ${(x_{n_k})}_{\knat}$ of $\sucx$ satisfying $f(x_{n_k}) > \alpha + \mu$ if $k \in \IN$. We know by ref\mbox{}lexivity that there exists $b \in \bigcap_{\knat} \adh{\co}(\{x_{n_j}: j \geq k\})$ and then $f(b) \geq \alpha + \mu$. However, $\{x_{n_j}: j \geq k\} \subseteq C_{n_k}$ and therefore $b \in \bigcap_{\knat} \adh{\co}(C_{n_k}) = \bigcap_{\knat} \adh{\co}(C_k) = A$, which is a contradiction.

Now take $\mu = \frac{\|y-z\|^2}{4}$ and consider $n_0 \in \IN$ such that if $n \geq n_0$ then $f(x_n) \leq \alpha + \mu$. Then we have
  $$
  \|z-x_n\|^2 = \|y-x_n\|^2+\|z-y\|^2+2(y-x_n|z-y) \geq \|y-x_n\|^2 + \|z-y\|^2 - 2\mu = \|y-x_n\|^2 + \frac{\|z-y\|^2}{2}
  $$
and this proves that $\limsupline_n \|z-x_n\| > \limsupline_n \|y-x_n\|$, again a contradiction. Thus we obtain $z \in A$.

Given $\vareps > 0$, the sequence ${(v_n)}_n$ def\mbox{}ined as in lemma \ref{centerinco} also has asymptotic center $z$ and asymptotic radius $r$. Reasoning as in the previous paragraphs we get to $$z \in \bigcap_{\knat} \adh{\co}(\{v_n: n \geq k\}) \subseteq \bigcap_{\knat} \adh{\co}(C_k \setminus B(z, r-\vareps)).$$

To conclude, f\mbox{}ix $\vareps > 0$ and $x^* \in X^*$. Given $\knat$, we have $$x^*(z) \leq \sup\{x^*(y): y \in C_k \setminus B\left(z, r - \vareps \right)\}.$$ Now a simple inductive process can be used to build the sequence ${(u_n)}_n$: just choose appropriate elements in $C_k \setminus B(z,r-\vareps)$, with $k$ increasing as necessary.

\end{proof}

\begin{theor}\label{seqbapa}
Let $X$ be a Hilbert space and $\sucx={(x_n)}_n, \sucy={(y_n)}_n$ bounded sequences in $X$. We have
  $$
  \|\asce(\sucx)-\asce(\sucy)\|^2 \leq d(\sucx,\sucy)(\asra(\sucx)+\asra(\sucy)+d(\sucx,\sucy)).
  $$
\end{theor}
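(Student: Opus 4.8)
The plan is to reduce the symmetric inequality to two one-sided estimates and then average them. Write $z_1 = \asce(\sucx)$ and $z_2 = \asce(\sucy)$ (single points, since the asymptotic center in a Hilbert space is a unitary set), and set $r_1 = \asra(\sucx)$, $r_2 = \asra(\sucy)$ and $D = d(\sucx,\sucy)$. If $z_1 = z_2$ there is nothing to prove, so assume $z_1 \neq z_2$. It suffices to establish the two one-sided bounds
$$\|z_1 - z_2\|^2 \leq (r_2 + D)^2 - r_1^2 \qquad\text{and}\qquad \|z_1 - z_2\|^2 \leq (r_1 + D)^2 - r_2^2,$$
because adding them and dividing by $2$ yields $\|z_1-z_2\|^2 \le \frac{1}{2}\big[(r_2+D)^2 - r_1^2 + (r_1+D)^2 - r_2^2\big] = D(r_1 + r_2 + D)$, which is exactly the claim. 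Since $D$ is symmetric in $\sucx$ and $\sucy$, the second bound follows from the first by interchanging the two sequences, so only the first must be proved.

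For the first bound I would first record that $\limsupline_n \|x_n - z_2\| \leq r_2 + D$. This follows from the definition of $d$: given $\eta > 0$ and $\lambda > D$, the equality $\limsupline_n \|y_n - z_2\| = r_2$ provides an index $N$ with $C_N(\sucy) \subseteq B(z_2, r_2 + \eta)$, while the definition of $d$ provides $m$ with $C_m(\sucx) \subseteq C_N(\sucy) + \lambda B_X \subseteq B(z_2, r_2 + \eta + \lambda)$; letting $\eta \to 0$ and $\lambda \to D$ gives the estimate. Next I would apply lemma \ref{ammabapa} to $\sucx$ (with center $z_1$ and radius $r_1$) for the functional $(z_1 - z_2 \,|\, \cdot\,) \in X^*$: for each fixed $\vareps > 0$ it yields a subsequence $(u_n)_n$ of $\sucx$ with $\|u_n - z_1\| \geq r_1 - \vareps$ and $(z_1 - z_2 \,|\, u_n - z_1) \geq -\vareps$ for all $n$. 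Expanding the square via the inner product,
$$\|u_n - z_2\|^2 = \|u_n - z_1\|^2 + \|z_1 - z_2\|^2 + 2\,(u_n - z_1 \,|\, z_1 - z_2) \geq (r_1 - \vareps)^2 + \|z_1 - z_2\|^2 - 2\vareps,$$
so $\|z_1 - z_2\|^2 \leq \|u_n - z_2\|^2 - (r_1-\vareps)^2 + 2\vareps$ for every $n$. Since $(u_n)_n$ is a subsequence of $\sucx$, we have $\liminfline_n \|u_n - z_2\|^2 \leq (r_2 + D)^2$, so passing to the lower limit in $n$ and then letting $\vareps \to 0$ gives $\|z_1 - z_2\|^2 \leq (r_2 + D)^2 - r_1^2$.

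The only delicate point is the directional control of the far-away subsequence, i.e.\ finding points of $\sucx$ that are simultaneously almost at maximal distance $r_1$ from $z_1$ and satisfy $(z_1 - z_2 \,|\, u_n - z_1) \geq -\vareps$; this is precisely the content of the ``in particular'' clause of lemma \ref{ammabapa}, whose proof rests on the convex-hull membership $z_1 \in \bigcap_{k,\vareps} \adh{\co}(C_k(\sucx) \setminus B(z_1, r_1 - \vareps))$. The inner-product identity and the tail comparison through the definition of $d$ are then routine. The one genuine novelty beyond the computation in \cite{BaPa} is the final symmetrization: each sequence controls $\|z_1 - z_2\|^2$ only by an asymmetric quantity, and it is the arithmetic mean of the two that collapses to the clean bound $D(r_1 + r_2 + D)$.
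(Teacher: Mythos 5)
Your proof is correct and follows essentially the same route as the paper's: the same application of Lemma \ref{ammabapa} with the functional in the direction $c_1-c_2$, the same inner-product expansion, the same tail comparison giving $\limsupline_n\|x_n-\asce(\sucy)\|\le \asra(\sucy)+d(\sucx,\sucy)$, and the same symmetrization of the two one-sided bounds. The only differences are cosmetic (you normalize the functional without the factor $2$, use a lower limit where the paper uses an upper limit, and spell out the tail-comparison step the paper calls ``clear'').
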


\begin{proof}

For the sake of abbreviation, let us write $c_1 = \asce(\sucx)$, $c_2 = \asce(\sucy)$, $r_1 = \asra(\sucx)$, $r_2 = \asra(\sucy)$ and $d = d(\sucx,\sucy)$. Now consider $x^* \in X^*$ given by $x^*(v)=(v|2(c_1-c_2))$. By using lemma \ref{ammabapa} applied to such $x^*$, it is straightforward to deduce that for every $\vareps > 0$ there exists a subsequence of $\sucx$, say ${(u_n)}_n$, such that
\begin{equation}\label{equ1}
\limsupline_n \|u_n - c_1\| \geq r_1 - \vareps
\end{equation}
and, for every $\nnat$,
\begin{equation}\label{equ2}
\|u_n - c_2\|^2 \geq \|u_n - c_1\|^2 + \|c_1 - c_2\|^2 - \vareps.
\end{equation}
Equations (\ref{equ1}) and (\ref{equ2}) combined produce
$$\limsupline_n \|u_n - c_2\|^2 \geq \left(r_1-\vareps \right)^2 + \|c_1-c_2\|^2 - \vareps.$$ On the other hand, it is clear that $$\limsupline_n \|u_n - c_2\| \leq \limsupline_n \|x_n-c_2\| \leq r_2 + d,$$ which implies $$\|c_1-c_2\|^2 \leq (r_2+d)^2-\left(r_1-\vareps\right)^2 + \vareps.$$ Since this happens for every $\vareps > 0$, we have $$\|c_1-c_2\|^2 \leq (r_2+d)^2 - r_1^2$$ and symmetrically we also have $$\|c_1-c_2\|^2 \leq (r_1+d)^2 - r_2^2,$$ joining both assertions yields the desired inequality.
\end{proof}

{\bf Acknowledgements.} The authors would like to thank Bernardo Cascales for his many suggestions that have greatly improved this paper. The second and third author would like to thank Carlos Angosto and Bernardo Cascales for their warm hospitality during a short stay at University of Murcia.


\end{document}